\documentclass[letterpaper]{article}
\usepackage{amsmath,amssymb,latexsym,amsthm}
\usepackage{epsfig}
\usepackage{graphicx}
\usepackage{xcolor}
\usepackage{setspace}
\usepackage{fancyhdr}
\newtheorem{theorem}{Theorem}[section]
\newtheorem{lemma}[theorem]{Lemma}
\newtheorem{proposition}[theorem]{Proposition}

\newtheorem{remark}[theorem]{Remark}

\newcommand{\C}{\mathbb{C}}
\newcommand{\T}{\mathbb{T}}
\newcommand{\D}{\mathbb{D}}
\def\v<#1>{\langle #1 \rangle}

\begin{document}
\title{Hadamard-Type Asymptotics for Products of Best Rational Approximation Errors}
\author{Vasiliy A. Prokhorov}
\date{}
\newcommand{\esssup}{\mathop{\rm ess \, sup}\limits}
\def\pl{{\cal P}}
\def\mr{{\cal R}}
\def\CC{{\bf C}}
\def\pfi{{\varphi}}
\def\mm{{\cal M}}

\def\capp{\mathop{\rm cap}\nolimits}
\def\RE{\mathop{\rm Re}\nolimits}

\def\ma{{\cal A}}

\mathsurround=2pt
\parskip=1mm
\renewcommand{\thefootnote}{}

\maketitle
\pagestyle{myheadings}
\footnote{{\it AMS Classification}: Primary 41A20, 41A21; Secondary 30E10, 47B35.}
\footnote{{\it Key words and phrases}: best rational and meromorphic approximation, Hadamard-type theorems for meromorphy radii, Hankel operator, AAK theorem.}

\begin{abstract}
Let \(E\subset\C\) be a compact set with connected complement, and let
 \(\rho_{n,m}(f;E)\) denote the error of best uniform rational approximation to a function
\(f\) analytic on \(E\) by rational functions whose numerator and denominator
have degrees at most \(n\) and \(m\), respectively. The Saff--Gonchar theorem is the fundamental result describing the asymptotic behavior of rational approximation errors along the rows of the Walsh table. It was first proved by Saff for continua with connected complement and Jordan boundary and subsequently extended by Gonchar to regular compact sets.
Motivated by a comparison of the Saff--Gonchar theorem with Hadamard's classical theorem on Hankel determinants, we study,
for each fixed \(m\ge 0\), the asymptotic behavior as \(n\to\infty\) of the products
\[
\prod_{k=0}^{m}\rho_{n-m+k,k}(f;E).
\]
We establish Hadamard-type asymptotic formulas for these products on the closed unit disc and, more generally,
on continua with connected complement and Jordan boundary. In the disc case, our approach combines Hadamard's classical theorem and the Saff--Gonchar theorem with weighted Hankel operators and an AAK-type theorem for meromorphic approximation.
We also show that there exists a common subsequence along which the extremal exponential behavior of these products
and of the corresponding products on the closed Green sublevel sets \(E_R\) is attained.
\end{abstract}

\section{Introduction}\label{sec:introduction}
\subsection{Notation and background}

Let $E\subset\C$ be a compact set whose complement
\[
\Omega:=\widehat{\C}\setminus E
\]
is connected. We assume  that $\Omega$ is regular for the Dirichlet problem. Let $g_\Omega(z,\infty)$, $z\in\Omega$, be the Green function of $\Omega$ with pole at infinity. We extend $g_\Omega(z,\infty)$ to $E$ by setting $g_\Omega(z,\infty)=0$ on $E$; then the extended function is continuous on $\C$.

For nonnegative integers $n$ and $m$, let $\mathcal R_{n,m}$ denote the class of rational functions $r=p/q$ such that $\deg p\le n$, $\deg q\le m$, and $q\not\equiv0$. For a continuous function $f$ on $E$, set
\[
\rho_{n,m}(f;E):=\inf_{r\in\mathcal R_{n,m}}\|f-r\|_{E},
\]
where $\|\cdot\|_{E}$ denotes the uniform norm on $E$, that is,
\[
\|\varphi\|_{E}:=\max_{z\in E}|\varphi(z)|.
\]

For $R>1$, define the Green sublevel set
\[
G_R:=\{z\in\C:\ g_\Omega(z,\infty)<\log R\},
\]
and its boundary 
\[
L_R:=\{z\in\Omega:\ g_\Omega(z,\infty)=\log R\}.
\]

Let $f$ be analytic on $E$, equivalently, analytic in a neighborhood of $E$. Let $R_0>1$ be the supremum of all $R>1$ such that $f$ admits an analytic continuation to $G_R$.

For an integer $m\ge1$, let $R_m$ be the supremum of all $R>1$ such that $f$ admits a meromorphic continuation to $G_R$ with at most $m$ poles, counted with multiplicity.
Clearly,
\[
1<R_0\le R_1\le R_2\le \cdots.
\]
For fixed $m$, the sequence $\{\rho_{n,m}(f;E)\}_{n=0}^\infty$ is called the $m$th row of the Walsh table; see \cite{Walsh1969}. 
In the polynomial case \(m=0\), the classical Bernstein--Walsh theory relates the geometric rate of best polynomial approximation to the analytic continuation of the approximated function; the earliest result goes back to Bernstein \cite{Bernstein1912}; see also \cite{Walsh1969,Varga}. Saff \cite{Saff1971} was the first to prove the corresponding result for rows of the Walsh table in best rational approximation, establishing a foundational rational analogue of the classical Bernstein--Walsh theorem. For continua with connected complement and Jordan boundary, he showed that the exponential rate of approximation along a row of the Walsh table determines the radius of the maximal region of meromorphic continuation with the prescribed number of poles. Hadamard's classical theorem on Hankel determinants was a fundamental ingredient in his work. His work stimulated an important line of research connecting the rate of best rational approximation, the distribution of poles of rational approximants in the rows of the Walsh table, and the meromorphic continuation of the approximated function.
In particular, Gonchar subsequently extended Saff's theorem to regular compact sets in his paper \emph{On a theorem of Saff} \cite{GoncharSaff1974}; see also \cite{GoncharGenPade1975,GoncharPolesRows1982}. Further information on the Saff--Gonchar theorem and its extensions and on the behavior of rows of the Walsh and Pad\'e tables may be found in \cite{BGM,Su2,LevLub,Su1}.

\medskip

\noindent
\textbf{Saff--Gonchar Theorem.}
Let \(m\ge 0\) be fixed. Then
\[
\limsup_{n\to\infty}\rho_{n,m}(f;E)^{1/n}=\frac{1}{R_m}<1.
\]

\medskip
Thus, for each fixed \(m\), the exponential rate of the errors \(\rho_{n,m}(f;E)\) is determined by the radius of meromorphic continuation
of \(f\) with at most \(m\) poles. The aim of the paper is to pass beyond the asymptotics of a single row of the Walsh table and to study the products
\[
\prod_{k=0}^{m}\rho_{n-m+k,k}(f;E), \qquad n\to\infty,
\]
formed from the first \(m+1\) rows of the Walsh table. The Saff--Gonchar theorem identifies the exponential rate of a single row
with the reciprocal of the corresponding meromorphy radius \(R_m\), while Hadamard's classical theorem on Hankel determinants involves the reciprocal of the product \(R_0R_1\cdots R_m\). This suggests considering a product extension of the Saff--Gonchar theorem that is simultaneously a rational-approximation analogue of Hadamard's theorem. We obtain such an extension by taking one approximation error from each of the first \(m+1\) rows and forming their product.
We do this along the initial segment of a superdiagonal
\[
(n-m,0),\ (n-m+1,1),\ \dots,\ (n,m).
\]
We show that this product has exponential rate equal to the reciprocal of the product \(R_0R_1\cdots R_m\).
\medskip

\noindent
For the closed unit disc \(E=\overline{\D}\), this result may be viewed as a direct analogue of Hadamard's classical theorem on Hankel determinants formed from Taylor coefficients \cite{Ha}; see also \cite{Di}. We then extend the disc results to continua with connected complement and Jordan boundary through the exterior conformal map; the analytic-boundary case appears as an intermediate step in 
the argument. In the final approximation step of this extension, we follow Saff's approach from the proof of his original theorem.

\subsection{Main results}

For $R\ge 1$, we write
\[
E_R:=\{z\in\C:\ g_\Omega(z,\infty)\le \log R\}.
\]
Thus \(E_R\) is the closed Green sublevel set, and \(E_1=E\).

\medskip
\noindent
For every fixed \(m\ge 0\) and every \(R\) with \(1\le R<R_0\), the Saff--Gonchar theorem
applied to \(f\) on the Green sublevel set \(E_R\) gives
\[
\limsup_{n\to\infty}\rho_{n,m}(f;E_R)^{1/n}=\frac{R}{R_m}.
\]
We now consider the products
\[
\prod_{k=0}^{m}\rho_{n-m+k,k}(f;E_R),
\]
formed from the first \(m+1\) rows of the Walsh table. For every fixed \(m\ge 0\) and every \(R\) with \(1\le R<R_0\),
the upper estimate 
\begin{equation}\label{eq:product-upper}
\limsup_{n\to\infty} \left( \prod_{k=0}^{m}\rho_{n-m+k,k}(f;E_R) \right)^{1/n} \le \frac{R^{m+1}}{R_0R_1\cdots R_m} 
\end{equation} 
follows by applying the Saff--Gonchar theorem to each factor and multiplying the resulting estimates.

Our first result compares the products on \(E\) with the corresponding products on the larger Green sublevel sets \(E_R\), where \(1<R<R_0\).
Before stating it, we note two simple facts. First, one must exclude the degenerate case in which \(f\) belongs to the class
\(\mathcal R_{N,m}\) for some \(N\ge 0\), since in that case
\(\rho_{n,m}(f;E)=0\) for all \(n\ge N\).
Second, since \(E\subset E_R\), it follows that
\[
\rho_{n-m+k,k}(f;E)
\le
\rho_{n-m+k,k}(f;E_R),
\qquad k=0,1,\dots,m,
\]
and hence
\[
\prod_{k=0}^{m}
\frac{\rho_{n-m+k,k}(f;E)}{\rho_{n-m+k,k}(f;E_R)}
\le 1.
\]

\begin{theorem}\label{thm:first-main}
Let \(E\subset\C\) be a continuum with at least two points and connected complement, and let \(f\) be
analytic on \(E\). Fix \(m\ge0\), and assume that \(f\) is not a rational
function with at most \(m\) finite poles, counted with multiplicity.
Let \(R\) satisfy \(1<R<R_0\).
Then
\[
\limsup_{n\to\infty}
\left(
\prod_{k=0}^{m}
\frac{\rho_{n-m+k,k}(f;E)}
{\rho_{n-m+k,k}(f;E_R)}
\right)^{1/n}
\le
\frac{1}{R^{m+1}}.
\]
\end{theorem}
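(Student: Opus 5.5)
The plan is to reduce the statement to a comparison of singular values of Hankel operators. There, passing from \(E_R\) to \(E\) amounts to multiplying by diagonal contractions, and a multiplicative (Horn-type) inequality for products of singular values produces exactly the factor \(R^{-n(m+1)}\).

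\emph{Step 1: the disc.} Let \(E=\overline{\D}\), so that \(E_R=\{|z|\le R\}\), and let \(f(z)=\sum_j a_jz^j\). For a shift \(N\), consider the Hankel operator \(H_N(f)=(a_{N+i+j+1})_{i,j\ge0}\) on \(\ell^2\). Replacing \(f\) by \(f_R(z)=f(Rz)\) turns the data on \(E_R\) into data on \(\overline{\D}\), and
\[
H_N(f_R)=R^{N+1}\,D_R\,H_N(f)\,D_R,\qquad D_R=\operatorname{diag}(R^i)_{i\ge0}.
\]
Hence \(H_N(f)=R^{-N-1}D_R^{-1}H_N(f_R)D_R^{-1}\). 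Since \(\|D_R^{-1}\|\le1\), Horn's inequality \(\prod_{k\le m}s_k(ABC)\le \prod_{k\le m}s_k(A)s_k(B)s_k(C)\) gives
\[
\prod_{k=0}^{m}s_k(H_N(f))\le R^{-(N+1)(m+1)}\prod_{k=0}^{m}s_k(H_N(f_R)).
\]
We apply this with \(N=n-m\), matched to the superdiagonal indices \((n-m+k,k)\). By the AAK theorem, \(s_k(H_{n-m+k}(\cdot))\) is the error of best \(L^\infty(\T)\) approximation by functions \(p+h\), where \(p\) is a polynomial of degree \(\le n-m+k\) and \(h\) is meromorphic with at most \(k\) poles in \(\D\). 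The index shift between \(H_{n-m+k}\) and \(H_{n-m}\) is handled by interlacing: compressing a Hankel operator by the shift only decreases its singular values, so \(s_k(H_{n-m+k})\le s_k(H_{n-m})\), with the reverse comparison \(s_{2k}(H_{n-m})\le s_k(H_{n-m+k})\).

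\emph{Step 2: comparing \(\rho\) with singular values, up to subexponential factors, on both sets.} The lower bound \(\rho_{n-m+k,k}(f;\overline{\D})\ge s_k(\cdot)\) is immediate from AAK, because rational functions with at most \(k\) poles are admissible meromorphic approximants. The reverse bound \(\rho_{n-m+k,k}\le C\,n^{c}\,\sigma_k\), for a suitable AAK-type singular-value quantity \(\sigma_k\), is obtained in two moves. First, take the AAK best meromorphic approximant and discard its poles outside a slightly enlarged disc. Second, truncate the remaining analytic part to a polynomial, at the cost of a factor \(O(n)\), using the weighted Hankel operator formulation. Because the hypothesis \(f\notin\mathcal R_{N,m}\) forces all these singular values to be nonzero, the ratios are well defined. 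After taking \(n\)th roots the losses \(C n^c\) disappear. We need the lower bound for the denominator (on \(E_R\)) and the upper bound for the numerator (on \(E\)); together with Step 1 these yield the claim for the disc.

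\emph{Step 3: general continua.} Let \(\Phi\) be the exterior conformal map of \(\Omega\) onto \(\{|w|>1\}\). First suppose \(L_1\) is analytic. Then Faber-type transfer identifies the products over \(E\) and over \(E_R\) with products over \(\{|w|\le1\}\) and \(\{|w|\le R\}\), at subexponential cost and with only a bounded change in the degree indices. The general Jordan case, and then arbitrary continua, are handled by approximating \(E\) from outside by the level sets \(E_{1+\varepsilon}\) and letting \(\varepsilon\to0\), in the spirit of Saff's argument.

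\emph{Main obstacle.} I expect the main difficulty to be the upper comparison in Step 2: bounding \(\rho_{n-m+k,k}(f;E)\) from above by the AAK singular values uniformly along the whole superdiagonal, so that the estimate holds for \emph{every} large \(n\) and not merely on subsequences. This is required because the statement concerns a limsup of a ratio, so the denominators must be controlled from below for all \(n\). My first attempt would be to work with products throughout, pairing the numerator and denominator singular values index by index via Horn's inequality. This avoids any need for individual asymptotics of the irregular single rows.
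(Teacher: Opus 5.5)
Your Step~1 is correct, and it gives more than you use. From $H_N(f)=R^{-N-1}D_R^{-1}H_N(f_R)D_R^{-1}$ and $\|D_R^{-1}\|\le1$ you get $s_k(H_N(f))\le R^{-N-1}s_k(H_N(f_R))$ for each $k$ separately, without Horn's inequality. This is a simpler substitute for the paper's Lemma~\ref{lem:circle-comparison}, which proves only the product version, via \eqref{eq:det_identity}, a contour shift and \eqref{eq:PP_det}.

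The link from Step~1 to the superdiagonal, however, is wrong. Under $z\mapsto1/z$, $\mathcal R_{n-m+k,k}$ becomes $\mathcal R^*_{n-m+k,k}\subset\mathcal M_{n-m+k,k}(\D)$ with weight $l=n-m$. So for every $k=0,\dots,m$ the entry $(n-m+k,k)$ is governed by $\Delta_{n-m+k,k}(F;\T)=s_k(H_{n-m}(f))$. The whole superdiagonal is the first $m+1$ singular numbers of one Hankel operator, which is exactly why the paper fixes $l=n-m$. The quantity $s_k(H_{n-m+k}(f))$ that you attach to $(n-m+k,k)$ is the AAK error of type $(n-m+2k,k)$. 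With your matching the chain does not close:
\begin{itemize}
\item an AAK approximant for $s_k(H_{n-m+k})$ produces rational functions of type $(n-m+2k,k)$, so it gives no upper bound on $\rho_{n-m+k,k}$;
\item interlacing only bounds the denominator below by $s_{2k}(H_{n-m}(f_R))$, which cannot be paired with Step~1, since Step~1 compares $s_k$ with $s_k$.
\end{itemize}
Drop the interlacing and use $H_{n-m}$ throughout.

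The genuine gap is Step~2, which you yourself flag as the main obstacle. The same-set bound $\rho_{n-m+k,k}(f;\overline{\D})\le Cn^c\sigma_k$ is asserted but not proved. Turning an AAK approximant on $\T$ into a rational function means discarding its part analytic off the disc, i.e.\ applying the Cauchy (Riesz) projection, which is unbounded on $L^\infty(\T)$. Removing distant poles or ``truncating at cost $O(n)$'' does not address this.

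You do not need this bound. The paper's inequalities hold for every $n$; they merely lose a factor $s^{n(m+1)}$ that is removed by letting $s\downarrow1$ at the end. Fix $1<s<R$. For an admissible meromorphic $u$ on $T_s$ with rational part $r\in\mathcal R_{n-m+k,k}$, Cauchy's formula gives $(f-r)(w)=\frac{1}{2\pi i}\int_{T_s}\frac{(f-u)(t)}{t-w}\,dt$ for $|w|\le1$. Hence $\rho_{n-m+k,k}(f;\overline{\D})\le C_s\,s_k(H_{n-m}(f_s))$; this is Lemma~\ref{lem:rho-star-comparison} in the uninverted variable. On the other side, $\rho_{n-m+k,k}(f;\overline{D}_R)\ge s_k(H_{n-m}(f_R))$. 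Your diagonal scaling, applied to $f_s$ with dilation $R/s$, then bounds the ratio of products by $C_s^{m+1}(s/R)^{(n-m+1)(m+1)}$. This gives $(s/R)^{m+1}$, and $R^{-(m+1)}$ as $s\downarrow1$.

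The same remark applies to Step~3. No transfer ``at subexponential cost'' on $E$ itself is needed, and none is available for a general continuum. The paper proceeds as follows:
\begin{itemize}
\item it enlarges the numerator trivially, $\rho(f;E)\le\rho(f;E_{r_1})$;
\item it uses the exact equivalence $\Delta(f;L_t)=\Delta(g;T_t)$ for $t>1$ (Lemma~\ref{lem:meromorphic-invariance});
\item it uses Cauchy-integral comparisons with slight shrinking (Lemma~\ref{lem:curve-set-comparison});
\item it then lets $r_2\downarrow1$ and $\rho\uparrow R$.
\end{itemize}
Thus Theorem~\ref{thm:first-main} needs neither analytic nor Jordan boundary, nor any Walsh--Saff approximation step.
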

\medskip

\noindent
The comparison of approximation errors on \(E\) and on the sets \(E_R\)
is also related to earlier work of the author \cite{Prokhorov2004,Prokhorov2005}
and to joint work of Kouchekian and the author \cite{KouPro}, in which similar
comparison ideas were used for the diagonals of the Walsh table; see also
\cite{ProRat}.

\medskip
\noindent
A natural question is whether the upper bounds in \eqref{eq:product-upper} and in Theorem~\ref{thm:first-main} are sharp. At present, the proofs of the corresponding equalities require additional geometric assumptions on \(E\).
More precisely, when \(E\) is a continuum with connected complement and Jordan boundary, the corresponding product on \(E_R\) admits an exact asymptotic formula.

\begin{theorem}\label{thm:two}
Assume that \(E\) is a continuum with connected complement and Jordan boundary. Fix \(m\ge0\). Then, for every \(R\) with \(1\le R<R_0\),
\[
\limsup_{n\to\infty}
\left(\prod_{k=0}^{m}\rho_{n-m+k,k}(f;E_R)\right)^{1/n}
=
\frac{R^{m+1}}{R_0R_1\cdots R_m}.
\]
\end{theorem}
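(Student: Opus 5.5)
The upper bound is already available as \eqref{eq:product-upper}, so only the lower estimate
\[
\limsup_{n\to\infty}\Bigl(\prod_{k=0}^{m}\rho_{n-m+k,k}(f;E_R)\Bigr)^{1/n}\ge \frac{R^{m+1}}{R_0R_1\cdots R_m}
\]
needs proof. I will first treat the disc case $E=\overline{\D}$, then transfer it to Jordan continua via the exterior conformal map $\Phi:\Omega\to\{|w|>1\}$.

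\emph{Disc case.} Here $E_R=\{|z|\le R\}$, so after the rescaling $z\mapsto Rz$ it suffices to take $R=1$. Write $f=\sum a_jz^j$. By Hadamard's theorem, the Hankel determinants $H_{n,k}=\det(a_{n-k+i+j})_{i,j=0}^{k}$ satisfy
\[
\limsup_{n}|H_{n,m}|^{1/n}=\frac{1}{R_0R_1\cdots R_m}.
\]
I will relate these determinants to best approximation errors using AAK theory. For the Hankel operator $\Gamma_n$ with symbol $z^{-n-1}f$, the singular numbers $s_0(\Gamma_n)\ge s_1(\Gamma_n)\ge\cdots$ satisfy
\[
s_k(\Gamma_n)=\inf\{\|f-g\|_{L^\infty(\T)}: g\in H^\infty_k\text{ with } z^{n-k+1}\text{-type constraint}\}.
\]
More concretely, I will use the weighted/AAK-type theorem the paper announces, which bounds meromorphic approximation errors with $k$ poles below by $s_k$ of a suitable finite-section Hankel matrix. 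Since meromorphic approximation on $\T$ by functions with at most $k$ poles in $\D$ is dominated by rational approximation in $\mathcal R_{n-m+k,k}$ (the polynomial part has degree at most $n-m+k$), this gives
\[
\rho_{n-m+k,k}(f;\overline{\D})\ge s_k\bigl(\Gamma^{(n)}\bigr)
\]
for a single Hankel operator $\Gamma^{(n)}$ whose $(m+1)\times(m+1)$ leading section is $(a_{n-m+i+j})_{i,j=0}^m$. The superdiagonal indexing is chosen exactly so that all $m+1$ factors refer to the same operator. Then
\[
\prod_{k=0}^m s_k(\Gamma^{(n)})\ge\prod_{k=0}^m s_k(P\Gamma^{(n)}P)\ge|H_{n,m}|,
\]
where $P$ is the projection onto the first $m+1$ coordinates: the first inequality is monotonicity of singular values under compression, and the second holds because the product of the singular values of an $(m+1)\times(m+1)$ matrix equals the absolute value of its determinant. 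Taking the $\limsup$ and applying Hadamard's theorem gives the lower bound for the disc.

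\emph{Jordan case.} For $E_R$ with $R>1$, the boundary $L_R$ is analytic. I will transplant the problem through the Faber transform associated with $\Phi$. Faber polynomials $F_j$ satisfy $F_j(z)=\Phi(z)^j+O(\Phi(z)^{-1})$, and the operator norms of the Faber map and its inverse between suitable function spaces grow at most subexponentially. Hence the errors $\rho_{n,k}(f;E_R)$ are comparable, up to factors $e^{o(n)}$, with errors for a transplanted function $\tilde f$ on a circle of radius $R$. Moreover, $\tilde f$ has meromorphy radii equal to $R_k$, because meromorphic continuation to $G_{R'}$ with $k$ poles corresponds to continuation to $\{|w|<R'\}$ with $k$ poles. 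The disc case then yields the claim for $1<R<R_0$.

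\emph{The case $R=1$ and the main obstacle.} For $R=1$, with $E$ having a merely Jordan boundary, I will follow Saff's approximation step. For $r\in\mathcal R_{n,k}$, rational functions with no poles near $E$ satisfy a Bernstein--Walsh type inequality
\[
\|f-r\|_{E_{R}}\lesssim R^{n}\|f-r\|_E\,e^{o(n)}
\]
applied to $f-r$, after accounting for poles of $r$ that lie in $G_{R_0}$. This gives
\[
\rho_{n,k}(f;E_R)\le R^{n+k}e^{o(n)}\rho_{n,k}(f;E).
\]
Combining this with the result on $E_R$ for $R$ close to $1$ and letting $R\downarrow1$ yields the claim on $E$.

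I expect the main obstacle to be controlling poles of near-best approximants that approach $E$. I would handle this by a Gonchar/Walsh lemma that bounds $|q|$ from below off a small exceptional set, whose size has to be shown not to affect the $n$th-root asymptotics.
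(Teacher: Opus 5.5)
The genuine gap is your treatment of $R=1$. The inequality $\rho_{n,k}(f;E_R)\le R^{n+k}e^{o(n)}\rho_{n,k}(f;E)$ is false, and no pointwise-in-$n$ comparison of this kind holds. Bernstein--Walsh controls the growth off $E$ of a rational function of type $(n,k)$. It does not control $f-r$, which carries the whole transcendental part of $f$. Already for $k=0$ the telescoping argument only gives $\rho_{n,0}(f;E_R)\le 2\sum_{j\ge n}R^{j+1}\rho_{j,0}(f;E)$, and this involves all later errors.

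Concretely, let $E=\overline{\D}$ and $f(z)=\sum_j (z/R_0)^{n_j}$ with $n_{j+1}/n_j\to\infty$. Take $n=n_j$ and $s\in\{1,R\}$. The Cauchy estimate for the coefficient of $z^{n_{j+1}}$, together with truncation, gives $\rho_{n,0}(f;\overline{D}_s)\asymp (s/R_0)^{n_{j+1}}$. So the ratio $\rho_{n,0}(f;\overline{D}_R)/\rho_{n,0}(f;\overline{\D})$ is of order $R^{n_{j+1}}$, which eventually exceeds $K^{n}$ for every fixed $K$. Letting $R\downarrow 1$ cannot repair this, and the difficulty has nothing to do with poles approaching $E$.

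A lower bound for $\rho(f;E)$ has to come from a smaller set, not from $E_R$. This is what the paper does in Lemma~\ref{lem:jordan-approximation}:
by Walsh's theorem, pick a continuum $E'$ with analytic boundary and $E'\subset E\subset E'_r$. Then $\rho_{n-m+k,k}(f;E')\le\rho_{n-m+k,k}(f;E)$. The analytic-boundary case with $R=1$ applies to $E'$, since its exterior map extends across the unit circle. The maximum principle applied to $g_{\Omega'}-g_\Omega$ gives $E_\rho\subset E'_{r\rho}$, hence $R_k'\le rR_k$. Finally let $r\downarrow 1$.

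Two smaller points. First, your disc case is essentially the paper's argument. One weighted Hankel operator with $l=n-m$ serves all rows $k=0,\dots,m$, AAK gives $s_k\le\rho_{n-m+k,k}(f;\overline{\D})$, and your compression bound (the product of the first $m+1$ singular numbers dominates the absolute value of the determinant of the leading section) is the special case of \eqref{eq:PP_det} with $\varphi_i=t^{m-i}$. However, the section must be $(a_{n-m+1+i+j})_{i,j=0}^m$, that is, $K_{n+1,m+1}(f)$ up to reflection. With $(a_{n-m+i+j})$, the bound $\rho_{n-m+k,k}\ge s_k$ already fails for $m=0$, $f(z)=z^N$, $n=N$.

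Second, for $R>1$, norm bounds for the Faber map on polynomials do not by themselves handle approximants with free poles. What you need is that the transfer sends type-$(n,k)$ approximants to type-$(n,k)$ approximants. The Cauchy projection $u\mapsto\frac{1}{2\pi i}\int_{T_R}\frac{u(\Psi(t))}{t-w}\,dt$ does this: it returns the principal parts of $u\circ\Psi$ at its poles in $|w|>R$ plus its polynomial part at $\infty$, annihilates $h$, and reproduces $g=\sum a_jw^j$. But it is bounded only on $|w|\le\rho<R$. This gives $\rho_{n,k}(g;\overline{D}_\rho)\le C(\rho,R)\,\rho_{n,k}(f;E_R)$, after which you let $\rho\uparrow R$. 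That is exactly the content of Lemmas~\ref{lem:meromorphic-invariance} and~\ref{lem:curve-set-comparison}.
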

\medskip

\noindent
Combining Theorems ~\ref{thm:first-main} and~\ref{thm:two}, we obtain that the estimate in Theorem~\ref{thm:first-main} is sharp.
\begin{theorem}\label{thm:equality-case}
Assume that \(E\) is a continuum with connected complement and Jordan boundary.
 Fix \(m\ge0\), assume that \(R_m<\infty\), and let \(R\) satisfy \(1<R<R_0\). Then
\[
\limsup_{n\to\infty}
\left(
\prod_{k=0}^{m}
\frac{\rho_{n-m+k,k}(f;E)}
{\rho_{n-m+k,k}(f;E_R)}
\right)^{1/n}
=
\frac{1}{R^{m+1}}.
\]
\end{theorem}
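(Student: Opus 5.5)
The upper bound "\(\le\)" is exactly Theorem~\ref{thm:first-main}. Before invoking it we check its hypothesis: if \(f\) were a rational function with at most \(m\) poles, then \(f\) would continue meromorphically to all of \(\C\) with at most \(m\) poles, so \(R_m=\infty\), contradicting the assumption \(R_m<\infty\). So only the lower bound "\(\ge\)" needs proof.

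For the lower bound we split the ratio into numerator and denominator. First apply Theorem~\ref{thm:two} twice: once with the given \(R\), and once with \(R=1\), i.e.\ on \(E_1=E\). This gives
\[
\limsup_{n\to\infty}\Bigl(\prod_{k=0}^m\rho_{n-m+k,k}(f;E)\Bigr)^{1/n}=\frac{1}{R_0\cdots R_m}.
\]
From \eqref{eq:product-upper} applied on \(E_R\) we also have
\[
\Bigl(\prod_{k=0}^m\rho_{n-m+k,k}(f;E_R)\Bigr)^{1/n}\le \frac{R^{m+1}}{R_0\cdots R_m}+\varepsilon
\qquad\text{for all large } n.
\]
This bound holds for every large \(n\), not only along a subsequence. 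Since \(R_m<\infty\), every \(R_k\) is finite, so the right-hand sides are positive numbers, and the products on \(E\) are nonzero for large \(n\) by the limsup formula. Now choose a subsequence \(\Lambda\) along which the numerator's \(n\)-th root tends to \(1/(R_0\cdots R_m)\). Along \(\Lambda\), the \(n\)-th root of the ratio is at least
\[
\frac{1/(R_0\cdots R_m)-\varepsilon}{R^{m+1}/(R_0\cdots R_m)+\varepsilon}.
\]
Letting \(\varepsilon\to0\) gives \(\limsup\ge R^{-(m+1)}\), which together with Theorem~\ref{thm:first-main} proves the equality.

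The argument is routine once Theorem~\ref{thm:two} is available. The one point to watch is that the limsup is attained in the numerator, while the denominator is controlled by an upper bound valid for every large \(n\); this is why no common subsequence is needed. We must also confirm that Theorem~\ref{thm:two} applies at \(R=1\), which it does because its range is \(1\le R<R_0\), and that the finiteness of \(R_m\) is what excludes the degenerate rational case and keeps all quantities positive.
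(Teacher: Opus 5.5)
Your proof is correct and is essentially the paper's argument. The paper combines Theorem~\ref{thm:two} at $R$ and at $R=1$ through the elementary inequality $\limsup a_n/b_n\ge \limsup a_n/\limsup b_n$, which is exactly what you prove inline by passing to a subsequence on which the numerator attains its limsup. You rightly note that for the denominator only the Saff--Gonchar upper bound \eqref{eq:product-upper} is needed, and your check that $R_m<\infty$ excludes the degenerate case of Theorem~\ref{thm:first-main} is something the paper omits.

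One small imprecision: a positive limsup only shows that the products on $E$ are nonzero infinitely often. In fact every error $\rho_{n-m+k,k}(f;E)$ is positive, because each row is nonincreasing in $n$ and satisfies $\limsup_n \rho_{n,k}(f;E)^{1/n}=1/R_k>0$.
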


\begin{theorem}\label{thm:common-subsequence}
Assume that \(E\) is a continuum with connected complement and Jordan boundary.
Fix \(m\ge 0\), and assume that \(R_m<\infty\). Then there exists a subsequence
\(\Lambda\subset\mathbb N\) such that, for every \(k=0,1,\dots,m\) and every \(R\) with \(1\le R<R_0\), one has
\[
\rho_{n-m+k,k}(f;E_R)^{1/n}\to \frac{R}{R_k},
\qquad n\to\infty,\ \ n\in\Lambda,
\]
and
\[
\left(\prod_{j=0}^{k}\rho_{n-m+j,j}(f;E_R)\right)^{1/n}
\to
\frac{R^{k+1}}{R_0R_1\cdots R_k},
\qquad n\to\infty,\ \ n\in\Lambda.
\]
In addition, for every \(R\) with \(1<R<R_0\),
\[
\left(
\prod_{j=0}^{k}
\frac{\rho_{n-m+j,j}(f;E)}
{\rho_{n-m+j,j}(f;E_R)}
\right)^{1/n}
\to
\frac{1}{R^{k+1}},
\qquad n\to\infty,\ \ n\in\Lambda.
\]
\end{theorem}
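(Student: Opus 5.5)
We derive Theorem~\ref{thm:common-subsequence} from Theorems~\ref{thm:first-main} and~\ref{thm:two}, the Saff--Gonchar theorem applied on the sets \(E_R\), and a squeezing argument. The point is that a limsup of a product that equals the product of the individual limsups forces every factor to attain its own limsup along the same subsequence.

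\textbf{Choice of \(\Lambda\).} Fix \(R=1\), that is, work on \(E\) itself. By Theorem~\ref{thm:two} with \(R=1\), there is a subsequence \(\Lambda\) along which
\[
\Bigl(\prod_{k=0}^{m}\rho_{n-m+k,k}(f;E)\Bigr)^{1/n}\to \frac{1}{R_0\cdots R_m}.
\]
The factor \(\rho_{n-m+k,k}(f;E)^{1/n}\) has the same limsup as \(\rho_{n,k}(f;E)^{1/n}\) after the index shift, since \((n-m+k)/n\to1\). By Saff--Gonchar this limsup is \(1/R_k\); in particular every factor is eventually \(\le 1/R_k+\varepsilon\). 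If some factor had \(\liminf_{n\in\Lambda}\rho_{n-m+k,k}(f;E)^{1/n}<1/R_k\), pass to a further subsequence on which it converges to a value \(<1/R_k\). On that subsequence the product would have limsup strictly less than \(1/(R_0\cdots R_m)\), which is a contradiction. Here \(R_m<\infty\) guarantees that all the \(1/R_k\) are positive, so the product bound is strict. Hence every factor converges to \(1/R_k\) along \(\Lambda\), and the partial products over \(j\le k\) converge to \(1/(R_0\cdots R_k)\).

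\textbf{Transfer to \(E_R\).} We must show that the same \(\Lambda\) works for all \(1<R<R_0\) at once. The Saff--Gonchar upper bound on \(E_R\) gives
\[
\limsup_{n\in\Lambda}\rho_{n-m+k,k}(f;E_R)^{1/n}\le R/R_k .
\]
For the reverse inequality, apply Theorem~\ref{thm:first-main} with \(m\) replaced by \(0\) on the single row; more precisely, use the single-factor comparison \(\rho_{n,k}(f;E)\lesssim R^{-n}\rho_{n,k}(f;E_R)\) in the exponential sense, which is the \(k\)-row case of the comparison behind Theorem~\ref{thm:first-main}. This gives
\[
\liminf_{n\in\Lambda}\rho_{n-m+k,k}(f;E_R)^{1/n}\ge R\,\liminf_{n\in\Lambda}\rho_{n-m+k,k}(f;E)^{1/n}=R/R_k .
\]
If only the product version of Theorem~\ref{thm:first-main} is available, squeeze instead. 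Along \(\Lambda\), Theorem~\ref{thm:first-main} gives
\[
\prod_j\rho(E_R)\ \ge\ (R^{m+1}-o(1))^n\prod_j\rho(E),
\]
so the limit inferior of the product on \(E_R\) is at least \(R^{m+1}/(R_0\cdots R_m)\). This matches the sum of the individual upper bounds, and the same strict-inequality argument as in the previous step then forces each factor on \(E_R\) to converge to \(R/R_k\). Theorem~\ref{thm:first-main} needs \(f\notin\mathcal R_{N,m}\), and this follows from \(R_m<\infty\).

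\textbf{Ratios, and the main obstacle.} Once each factor converges along \(\Lambda\), both on \(E\) and on \(E_R\), the statement about ratios follows by dividing the limits: \((1/R_k)/(R/R_k)=1/R\) for each factor, so the partial product of ratios over \(j\le k\) tends to \(R^{-(k+1)}\).

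The main obstacle is the transfer step. The squeeze there requires a lower bound for the product on \(E_R\) along the particular subsequence \(\Lambda\) chosen on \(E\), and only Theorem~\ref{thm:first-main}, which holds for every \(n\) and not just along some subsequence, supplies this. It is therefore essential that Theorem~\ref{thm:first-main} is a limsup over all \(n\) of the ratio, and that \(\Lambda\) is chosen using \(R=1\) rather than a particular \(R>1\).
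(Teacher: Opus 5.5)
Your argument is correct and is essentially the paper's. You choose $\Lambda$ from Theorem~\ref{thm:two} at $R=1$ and squeeze the factors against the Saff--Gonchar upper bounds. The upper bound of Theorem~\ref{thm:first-main} (which the paper invokes as a contradiction with Theorem~\ref{thm:equality-case}) then forces the product on $E_R$ to converge along the same $\Lambda$, after which you repeat the squeeze and the ratio statements follow.

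Keep only your product-based route for the transfer step. The single-row comparison you mention first is not a case of Theorem~\ref{thm:first-main} for rows $k\ge1$: setting $m=0$ there only covers $k=0$. That comparison is not established anywhere in the paper.
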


\noindent
For each fixed \(m\), the subsequence \(\Lambda\) is chosen so that the product
\[\prod_{j=0}^{m}\rho_{n-m+j,j}(f;E)\]
attains, along \(\Lambda\), the limsup in Theorem~\ref{thm:two} with \(R=1\):
\[
\lim_{n\to\infty, n\in\Lambda}
\left(\prod_{k=0}^{m}\rho_{n-m+k,k}(f;E)\right)^{1/n}
=
\frac{1}{R_0R_1\cdots R_m}.
\]
Thus, along \(\Lambda\), the extremal exponential behavior is attained simultaneously
for all rows \(k=0,1,\dots,m\) and for all partial products up to level \(k\), both on \(E\)
and on the Green sublevel sets \(E_R\), \(1<R<R_0\).
In this sense, a subsequence selected from the product of the approximation errors from the first
\(m+1\) rows of the Walsh table on \(E\) already determines the whole finite hierarchy below it.

\medskip
\noindent
For \(R>0\), let
\[
D_R:=\{z\in\C:\ |z|<R\}.
\]
We write
\[
\overline{D}_R:=\{z\in\C:\ |z|\le R\}.
\]
In particular, the open unit disc is \(\D=D_1\).

\medskip
\noindent
The next theorem treats the disc case \(E=\overline{\D}\), where the product asymptotics admit a classical interpretation in terms of Hankel determinants. This provides a direct connection with Hadamard's theorem and clarifies the analogy underlying our results.

\begin{theorem}\label{thm:hankel-disc-quotient-R}
Fix \(m\ge0\), assume that \(R_m<\infty\), and let \(R\) satisfy \(1<R<R_0\).
Then
\[
\limsup_{n\to\infty}
\left(
\frac{\displaystyle\bigl|\det K_{n+1,m+1}(f)\bigr|}
{\displaystyle\prod_{k=0}^{m}\rho_{\,n-m+k,\,k}\bigl(f; \overline{D}_R \bigr)}
\right)^{1/n}
=
\frac{1}{R^{m+1}},
\]
where \(K_{n+1,m+1}(f)\) denotes the \((m+1)\times (m+1)\) Hankel matrix formed from the Taylor coefficients of \(f\).
\end{theorem}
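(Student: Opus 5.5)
The argument has two halves. The lower bound comes from Hadamard's theorem combined with the upper estimate \eqref{eq:product-upper}. The upper bound reduces, through an AAK-type comparison of the Hankel determinant with products of best approximation errors on \(\overline{\D}\), to Theorem~\ref{thm:first-main}. Throughout, \(E=\overline{\D}\), \(\Omega=\widehat{\C}\setminus\overline{\D}\), \(g_\Omega(z,\infty)=\log|z|\), \(E_R=\overline{D}_R\), and \(R_k\) is the radius of the largest disc \(D_{R_k}\) to which \(f\) extends meromorphically with at most \(k\) poles.

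\emph{Lower bound.} Hadamard's theorem on Hankel determinants, with \(K_{n+1,m+1}(f)\) indexed as in the statement, gives
\[
\limsup_{n\to\infty}|\det K_{n+1,m+1}(f)|^{1/n}=\frac{1}{R_0R_1\cdots R_m},
\]
and this quantity is positive because \(R_m<\infty\). Choose a subsequence \(\Lambda\) along which this limsup is attained as a limit. By \eqref{eq:product-upper}, for every \(\varepsilon>0\) and all large \(n\),
\[
\Bigl(\prod_{k=0}^m\rho_{n-m+k,k}(f;\overline{D}_R)\Bigr)^{1/n}\le \frac{R^{m+1}}{R_0\cdots R_m}+\varepsilon .
\]
Dividing along \(\Lambda\) and letting \(\varepsilon\to0\) shows that the limsup in the statement is at least \(1/R^{m+1}\). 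The products are nonzero here, because \(R_m<\infty\) rules out \(f\) being rational with at most \(m\) poles.

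\emph{Upper bound, reduction.} I would prove the comparison
\begin{equation*}
|\det K_{n+1,m+1}(f)|\le C_m\, n^{c_m}\prod_{k=0}^{m}\rho_{n-m+k,k}(f;\overline{\D}),
\end{equation*}
where \(C_m\) and \(c_m\) depend only on \(m\) (and possibly \(f\)) but not on \(n\). Given this, the ratio in the statement is bounded by a subexponential factor times
\[
\prod_{k=0}^m\frac{\rho_{n-m+k,k}(f;\overline{\D})}{\rho_{n-m+k,k}(f;\overline{D}_R)} ,
\]
and Theorem~\ref{thm:first-main} bounds its \(n\)th-root limsup by \(1/R^{m+1}\). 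That theorem applies because \(\overline{\D}\) is a continuum with connected complement and \(f\) is not rational with at most \(m\) poles.

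\emph{The comparison, which I expect to be the main obstacle.} Write \(f=\sum a_jz^j\) and consider the Hankel operator \(H_n\) on \(H^2\) with symbol \(\bar z^{\,n-m+1}f\) (or a suitably weighted variant), whose matrix is \((a_{n-m+1+i+j})_{i,j\ge0}\). The \((m+1)\times(m+1)\) matrix \(K_{n+1,m+1}(f)\) is a compression of this matrix, or of a shift of it, after row and column permutations. By Weyl/Horn inequalities, \(|\det K_{n+1,m+1}(f)|\le\prod_{k=0}^m s_k(A_n)\) for the corresponding compression \(A_n\), and \(s_k(A_n)\le s_k(H_n)\). The AAK theorem identifies \(s_k(H_n)\) with the error of best \(L^\infty(\T)\) approximation of \(\bar z^{\,n-m+1}f\) by \(H^\infty+\) meromorphic functions with at most \(k\) poles in \(\D\). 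Taking the competitor \(\bar z^{\,n-m+1}r\) with \(r\in\mathcal R_{n-m+k,k}\) best on \(\overline{\D}\), I want to conclude \(s_k(H_n)\le\rho_{n-m+k,k}(f;\overline{\D})\). The difficulty is that a single operator does not match all the index pairs \((n-m+k,k)\) at once. The poles of \(r\) may lie in \(\D\) and its degree pattern must fit. To handle this, I would use a separate shifted operator \(H^{(k)}\) with symbol \(\bar z^{\,n-m+k+1}f\) for each \(k\). I would then bound the determinant by \(\prod_k s_k(H^{(k)})\), using an interlacing or row-by-row Gram--Schmidt argument that loses at most polynomial factors in \(n\). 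Poles of \(r\) outside \(\D\) are absorbed into the \(H^\infty\) part by multiplying through by the denominator, which is harmless since the degree bookkeeping costs only a shift of at most \(m\) in the exponent. If this matching fails, the fallback is to prove the comparison only along the subsequences that matter, using the subsequence mechanism of Theorem~\ref{thm:common-subsequence}.
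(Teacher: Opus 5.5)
Your lower bound is correct and is essentially the paper's. Reducing the upper bound to a comparison $|\det K_{n+1,m+1}(f)|\lesssim\prod_{k=0}^{m}\rho_{n-m+k,k}(f;\overline{\D})$ followed by Theorem~\ref{thm:first-main} is also logically sound. The paper takes a lighter route: it applies the $R=1$ comparison to $g(z)=f(Rz)$ and uses $\det K_{n+1,m+1}(g)=R^{(m+1)(n+1)}\det K_{n+1,m+1}(f)$, so Theorem~\ref{thm:first-main} is not needed.

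The gap is that the comparison itself is never established. You assert that one Hankel operator cannot serve all pairs $(n-m+k,k)$, and you substitute shifted operators $H^{(k)}$ with an unspecified interlacing or Gram--Schmidt step. The inequality $|\det K_{n+1,m+1}(f)|\le C n^{c}\prod_{k}s_k(H^{(k)})$ that this step is meant to give is false. Suppose, as your indexing indicates, that $H^{(k)}$ has matrix $(f_{n-m+k+1+i+j})_{i,j\ge0}$. Take $m=1$, the gap series $f(z)=\sum_{j\ge1}\theta^{2^j}z^{2^j}$ with $0<\theta<1$ (so $R_0=R_1=1/\theta<\infty$), and $n=2^j-1$. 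Then $|\det K_{n+1,2}(f)|=\theta^{2^{j+1}}$. On the other hand, $s_0(H^{(0)})\le\sum_{\ell\ge j}\theta^{2^\ell}$, and after removing the rank-one corner entry $\theta^{2^j}$, $s_1(H^{(1)})\le\sum_{\ell>j}\theta^{2^\ell}$. Hence $s_0(H^{(0)})\,s_1(H^{(1)})\le(1-\theta)^{-2}\theta^{3\cdot 2^j}$, an exponential loss in $n$, not a polynomial one. The fallback through Theorem~\ref{thm:common-subsequence} concerns only rational approximation errors and says nothing about Hankel determinants.

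The obstruction you perceive comes from mixing conventions. With the AAK class $H^\infty+\{\text{at most } k \text{ poles in } \D\}$, your competitor $\bar z^{\,n-m+1}r=r(z)/z^{n-m+1}$ has a pole of order $n-m+1$ at $0$ and is not admissible. In that convention, the symbol whose Hankel matrix is $(f_{n-m+1+i+j})_{i,j\ge0}$ is $\phi(z)=z^{n-m}f(1/z)$ on $\T$; this is equivalent to the paper's $A_{F,\D}$ with weight $l=n-m$. Its leading $(m+1)\times(m+1)$ block is $K_{n+1,m+1}(f)$ with rows and columns reversed, so $|\det K_{n+1,m+1}(f)|\le\prod_{k=0}^{m}s_k(H_\phi)$.

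The missing observation is that all pairs $(n-m+k,k)$ on a superdiagonal have the same degree excess $n-m$. Let $r=P/Q\in\mathcal R_{n-m+k,k}$ be in lowest terms with $\|f-r\|_{\overline{\D}}<\infty$. Then $z^{n-m}r(1/z)=\tilde P(z)/\tilde Q(z)$, where $\tilde P(z)=z^{n-m+k}P(1/z)$ and $\tilde Q(z)=z^{k}Q(1/z)$. Here $\deg\tilde Q\le k$ and all zeros of $\tilde Q$ lie in $\D$. So this is an admissible AAK competitor for the $k$th singular number of the \emph{same} operator, and $s_k(H_\phi)\le\|f-r\|_{\T}\le\rho_{n-m+k,k}(f;\overline{\D})$ for every $k=0,\dots,m$ at once. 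This gives \eqref{eq:Hankel_det_vs_rho_disc} with constant $1$ and no polynomial factors. It is exactly the paper's argument, which reaches the same bound through \eqref{eq:PP_det} with $l=n-m$ and $\varphi_i=t^{m-i}$, together with the weighted AAK identity $s_k(A_{F,\D})=\Delta_{n-m+k,k}(F;\T)$.
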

\medskip

\noindent
\textbf{Organization of the paper.}
Section~\ref{sec:hadamard} recalls Hadamard's classical theorem and the AAK-type background needed for the disc case; see \cite{AAK1,AAK2,Peller}. Section~\ref{sec:proofs-unit-disc} treats the case of the unit disc and establishes the corresponding results in the disc case, including Theorem~\ref{thm:hankel-disc-quotient-R} and the disc analogues of Theorems~\ref{thm:first-main}, \ref{thm:two}, and \ref{thm:equality-case}. In Section~\ref{sec:four} we then pass to continua by means of the exterior conformal map and the corresponding Faber polynomial expansion. The proof of Theorem~\ref{thm:common-subsequence} is given in the final subsection of Section~\ref{sec:four}.

\section{Hadamard's Theorem, Hankel Determinants, and AAK Theory}\label{sec:hadamard}
\subsection{Power series, Hankel determinants, and Hadamard's theorem}

Suppose that \(f\) is analytic at \(0\) and is represented near \(0\) by the convergent series
\begin{equation}\label{eq:series}
f(z)=\sum_{k=0}^{\infty} f_k z^k .
\end{equation}
Denote by \(R_0\) the radius of convergence of the series \eqref{eq:series}, and assume that \(R_0>1\).
For each integer \(m\ge 1\), let \(R_m\) be the supremum of all \(R>0\) such that \(f\)
admits a meromorphic continuation to \(D_R\) with at most \(m\) poles, counted with multiplicity. Then
\[
R_0\le R_1\le R_2\le\cdots .
\]

Fix \(m\ge 1\). For \(n\ge 0\), define the \(m\times m\) Hankel matrix
\[
K_{n,m}:=
\begin{bmatrix}
f_{n+m-1} & f_{n+m-2} & \cdots & f_{n}\\
f_{n+m-2} & f_{n+m-3} & \cdots & f_{n-1}\\
\vdots    & \vdots    & \ddots & \vdots\\
f_{n}     & f_{n-1}   & \cdots & f_{n-m+1}
\end{bmatrix},
\]
where, as usual, \(f_k:=0\) for \(k<0\). Thus \(K_{n,1}=[f_n]\). Set \(l_0:=1\) and, for \(m\ge 1\),
\[
l_m:=\limsup_{n\to\infty} \bigl|\det K_{n,m}\bigr|^{1/n}.
\]

A classical theorem of Hadamard \cite{Ha} (see also \cite{Di}) expresses the meromorphy radii \(\{R_m\}\) in terms of these Hankel-determinant asymptotics.

\medskip
\noindent
\textbf{Hadamard's theorem.}
For \(m=0,1,2,\dots\),
\[
R_m=\frac{l_m}{l_{m+1}}.
\]
Here \(R_m=\infty\) if \(l_1,\dots,l_m\neq 0\) and \(l_{m+1}=0\).

In particular, for every \(m\ge 1\),
\begin{equation}\label{eq:Hadamard_product}
\limsup_{n\to\infty} \bigl|\det K_{n,m}\bigr|^{1/n}
=\frac{1}{R_0R_1\cdots R_{m-1}}.
\end{equation}
Hadamard's theorem shows that Hankel determinants encode the radii of meromorphic continuation and, in particular, yield the product relation \eqref{eq:Hadamard_product}. In this paper, we develop an analogue of \eqref{eq:Hadamard_product} for rational approximation on a continuum with connected complement and Jordan boundary, expressing the asymptotics of products of best rational approximation errors in terms of the corresponding radii of meromorphic continuation.

\subsection{Hankel operators, bilinear forms, and AAK-type estimates on domains with analytic boundary}\label{sec:aak}

Let \(G\subset\C\) be a bounded domain whose boundary \(\Gamma:=\partial G\) consists of \(N\) disjoint analytic Jordan curves, positively oriented with respect to \(G\), and assume that \(0\in G\).

For \(1\le p\le\infty\), let \(E_p(G)\) denote the Smirnov class on \(G\), identified with its boundary values on \(\Gamma\); see \cite{Koo,TumarkinKhavinson1955,TumarkinKhavinson1958}. We shall use the standard characterization that a function \(\varphi\in L_p(\Gamma)\) is the boundary value of a function in \(E_p(G)\) if and only if
\[
\int_\Gamma \frac{\varphi(t)\,dt}{t-z}=0,
\qquad z\in\widehat{\C}\setminus\overline{G}.
\]

Fix an integer \(l\ge 0\). Denote by \(L_{2,l}(\Gamma)\) the Hilbert space endowed with the weighted inner product
\[
\langle u,v\rangle_{2,l}:=\int_\Gamma u(t)\overline{v(t)}\,|t|^l\,|dt|.
\]
Denote by \(H_{2,l}(G)\) the class of functions on \(\Gamma\) consisting of
\[
q(t)=\frac{\varphi(t)}{t^l},
\qquad \varphi\in E_2(G).
\]
We regard \(H_{2,l}(G)\) as a closed subspace of \(L_{2,l}(\Gamma)\), and write
\[
L_{2,l}(\Gamma)=H_{2,l}(G)\oplus H_{2,l}(G)^\perp .
\]
We now record the orthogonality characterization needed below: a function
\(u\in L_{2,l}(\Gamma)\) belongs to \(H_{2,l}(G)^\perp\) if and only if there exists
\(v\in E_2(G)\) such that
\[
\overline{u(t)}\,|t|^l\,|dt|=v(t)t^l\,dt
\]
for almost every \(t\in\Gamma\).

Let \(f\in C(\Gamma)\). Associated with the symbol \(f\), we consider the weighted Hankel operator
\[
A_{f,G}:E_2(G)\to H_{2,l}(G)^\perp,
\qquad
A_{f,G}\alpha:=P_-(f\alpha),
\]
where \(P_-\) is the orthogonal projection from \(L_{2,l}(\Gamma)\) onto
\(H_{2,l}(G)^\perp\); see \cite{PS2000}. The operator \(A_{f,G}\) is compact.

Motivated by the Hardy--Smirnov approach of \cite{PPu1}, we consider the bilinear Hankel form
\[
[u,v]_{f,G}:=\int_\Gamma u(t)v(t)f(t)\,t^l\,dt,
\qquad u,v\in E_2(G).
\]
This form generates an antilinear compact operator
\[
B_{f,G}:E_2(G)\to E_2(G)
\]
defined by
\[
[u,v]_{f,G}=\langle u,B_{f,G}v\rangle_{2,l}
=\langle v,B_{f,G}u\rangle_{2,l},
\qquad u,v\in E_2(G).
\]

If \(\alpha\in E_2(G)\), then there exists \(\beta\in E_2(G)\) such that
\begin{equation}\label{eq:weighted_fu_beta}
\bigl(\alpha(t)f(t)-\beta(t)/t^l\bigr)t^l\,dt
=
\overline{(B_{f,G}\alpha)(t)}\,|t|^l\,|dt|
\end{equation}
for almost every \(t\in\Gamma\).

We  write
\[
s_k(A_{f,G}),\qquad k=0,1,2,\dots,
\]
for the singular numbers of \(A_{f,G}\), arranged in nonincreasing order and
repeated according to multiplicity; see \cite{GKr}.

For \(n\ge 0\), let \(\mathcal M_{n+l,n}(G)\) denote the class of functions representable in the form
\[
h=\frac{p}{q\,t^l},
\]
where \(p\in E_\infty(G)\), \(q\) is a polynomial of degree at most \(n\), and \(q\not\equiv0\). Set
\[
\Delta_{n+l,n}(f;\Gamma):=\inf_{h\in\mathcal M_{n+l,n}(G)}\|f-h\|_\infty .
\]
Then the weighted Adamyan--Arov--Kre\u{\i}n theorem for multiply connected domains yields \cite{PS2000,Pro1}
\[
\Delta_{n+N-1+l,n+N-1}(f;\Gamma)\le s_n(A_{f,G})\le \Delta_{n+l,n}(f;\Gamma),
\qquad n\ge N-1.
\]
In the simply connected case, this estimate becomes an equality:
\begin{equation}\label{eq:AAK_general}
\Delta_{n+l,n}(f;\Gamma)=s_n(A_{f,G}),
\qquad n=0,1,2,\dots .
\end{equation}
In particular, for the classes \(\mathcal M_{n+l,n}(\D)\) on the unit disc \(\D\),
the same equality also follows by applying the classical Adamyan--Arov--Kre\u{\i}n theorem
to \(z^l f\); see \cite{AAK1,AAK2}. See also \cite{KPS1998,PS1999}
for related weighted and approximation-theoretic applications.

We have
\[
A_{f,G}^\ast A_{f,G}=B_{f,G}^2.
\]
Indeed, for \(u,v\in E_2(G)\),
\[
\langle A_{f,G}u,A_{f,G}v\rangle_{2,l}
=\int_\Gamma (A_{f,G}u)(t)\overline{A_{f,G}v(t)}\,|t|^l\,|dt|
=\langle B_{f,G}v,B_{f,G}u\rangle_{2,l}
=\langle u,B_{f,G}^2v\rangle_{2,l}.
\]

Let \(\{s_k(A_{f,G})\}_{k=0}^\infty\) be the singular numbers of \(A_{f,G}\), that is, the eigenvalues of \((B_{f,G}^2)^{1/2}\), and let \(\{\alpha_k\}_{k=0}^\infty\) be a corresponding orthonormal system of eigenfunctions of \(B_{f,G}\):
\[
B_{f,G}\alpha_k=s_k(A_{f,G})\,\alpha_k,
\qquad k=0,1,2,\dots .
\]
Then
\begin{equation}\label{eq:double_orthogonality}
\langle \alpha_i,\alpha_j\rangle_{2,l}=\delta_{ij},
\qquad
[\alpha_i,\alpha_j]_{f,G}=s_j(A_{f,G})\,\delta_{ij},
\qquad i,j=0,1,2,\dots .
\end{equation}
By \eqref{eq:weighted_fu_beta}, there exist functions \(\beta_k\in E_2(G)\) such that
\[
\bigl(\alpha_k(t)f(t)-\beta_k(t)/t^l\bigr)t^l\,dt
=
s_k(A_{f,G})\,\overline{\alpha_k(t)}\,|t|^l\,|dt|,
\qquad k=0,1,2,\dots,
\]
for almost every \(t\in\Gamma\).

It follows from \eqref{eq:double_orthogonality} that
\[
\int_\Gamma \alpha_i(t)\alpha_j(t)f(t)\,t^l\,dt
=
s_j(A_{f,G})\,\delta_{ij},
\qquad i,j=0,1,2,\dots ,
\]
and hence the product of the first \(m+1\) singular numbers admits the determinantal representation
\begin{equation}\label{eq:det_identity}
\prod_{k=0}^{m}s_k(A_{f,G})
=
\det\left(
\int_\Gamma \alpha_i(t)\alpha_j(t)f(t)\,t^l\,dt
\right)_{i,j=0}^{m},
\qquad m=0,1,2,\dots .
\end{equation}

We shall also use the following inequality; see \cite{PPu1}. For any \(\varphi_0,\dots,\varphi_m\in E_2(G)\),
\begin{equation}\label{eq:PP_det}
\left|
\det\left(
\int_\Gamma \varphi_i(t)\varphi_j(t)f(t)\,t^l\,dt
\right)_{i,j=0}^{m}
\right|
\le
\Bigl(\prod_{k=0}^{m}s_k(A_{f,G})\Bigr)
\det\Bigl(\langle \varphi_i,\varphi_j\rangle_{2,l}\Bigr)_{i,j=0}^{m}.
\end{equation}

In what follows, the weighted AAK identity \eqref{eq:AAK_general}, the determinantal identity \eqref{eq:det_identity}, and inequality \eqref{eq:PP_det} will be the basic tools for passing from singular numbers of Hankel operators to asymptotic estimates for products of rational approximation errors. 
\section{The Disc Case}\label{sec:proofs-unit-disc}
In this section we treat the case $E=\overline{\D}$. We first use the inversion $z\mapsto 1/z$ to rewrite rational approximation on $\overline{\D}$ as a corresponding meromorphic approximation problem for the transformed function $F(z)=f(1/z)$. We then use the weighted AAK theorem in the disc together with Hadamard's theorem on Hankel determinants to prove 
the disc-case analogues of Theorems~\ref{thm:first-main}, \ref{thm:two}, and \ref{thm:equality-case}, and Theorem~\ref{thm:hankel-disc-quotient-R}.

\subsection{Passage to the exterior problem and the disc AAK setup}\label{subsec:unit_disc_theorems}

Assume that $f$ is analytic at $0$ and has a Taylor expansion
\[
f(z)=\sum_{k=0}^{\infty} f_k z^k,
\qquad R_0>1,
\]
where $R_0$ is the radius of convergence.

First we consider the disc-case form of Theorem~\ref{thm:two} for $R=1$.

\begin{proposition}\label{prop:disc_product}
Fix $m\ge0$. Then
\[
\limsup_{n\to\infty}
\left(\prod_{k=0}^{m}\rho_{n-m+k,k}(f;\overline{\D})\right)^{1/n}
=\frac{1}{R_0R_1\cdots R_m}.
\]
\end{proposition}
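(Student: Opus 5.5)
The plan is to obtain the upper bound from the Saff--Gonchar theorem and the lower bound from the weighted AAK theorem on the disc together with Hadamard's theorem. The upper bound ``\(\le\)'' is exactly \eqref{eq:product-upper} with \(R=1\): apply the Saff--Gonchar theorem to each factor \(\rho_{n-m+k,k}(f;\overline{\D})\), whose exponential rate in \(n\) is at most \(1/R_k\), and multiply. So the whole work is the reverse inequality for the \(\limsup\). I intend to bound the product from below, for each \(n\), by a constant multiple of a Hankel determinant of Taylor coefficients, and then invoke \eqref{eq:Hadamard_product}.

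\emph{Step 1: invert and set up the meromorphic problem.} Put \(l:=n-m\) and \(F(w):=f(1/w)\), which is analytic on a neighbourhood of the unit circle \(\T\). Take \(r=p/q\in\mathcal R_{n-m+k,k}\). On \(\T\) we can write
\[
r(1/w)=\frac{P(w)}{w^{l}\,Q(w)},
\]
where \(Q(w)=w^{k}q(1/w)\) is a polynomial of degree at most \(k\), and \(P(w)=w^{l+k}p(1/w)\) is a polynomial (degree at most \(l+k\)), hence in \(H_\infty\). Therefore \(r(1/\cdot)\in\mathcal M_{k+l,k}(\D)\). Since \(t\mapsto 1/t\) preserves the sup norm on \(\T\), this gives
\[
\rho_{n-m+k,k}(f;\overline{\D})\ \ge\ \Delta_{k+l,k}(F;\T)=s_k(A_{F,\D}),
\]
where the weight is \(l=n-m\) and the last equality is the disc case of \eqref{eq:AAK_general}. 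Multiplying over \(k\) yields
\[
\prod_{k=0}^m\rho_{n-m+k,k}(f;\overline{\D})\ \ge\ \prod_{k=0}^m s_k(A_{F,\D}).
\]

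\emph{Step 2: compare with a Hankel determinant.} I apply \eqref{eq:PP_det} with \(\varphi_i(w)=w^i\), \(i=0,\dots,m\). Since \(|t|^l=1\) on \(\T\), the Gram matrix \(\langle w^i,w^j\rangle_{2,l}\) equals \(2\pi I\). The entries of the bilinear-form matrix are
\[
\int_\T w^{i+j+l}F(w)\,dw.
\]
Expanding \(F(w)=\sum_k f_k w^{-k}\), which converges on \(\T\) because \(R_0>1\), each entry equals \(2\pi i\,f_{l+i+j+1}\). Hence the determinant is \((2\pi i)^{m+1}\) times the determinant of an \((m+1)\times(m+1)\) Hankel matrix built from \(f_{n-m+1},\dots,f_{n+m+1}\). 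After reversing row order, which only changes the sign, this is \(\pm\det K_{n+1,m+1}\) in the paper's notation, possibly with an index shift by one. Consequently
\[
\prod_{k=0}^m s_k(A_{F,\D})\ \ge\ c_m\,|\det K_{n+1,m+1}(f)|,
\]
with \(c_m>0\) independent of \(n\).

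\emph{Step 3: conclude.} Taking \(n\)th roots and \(\limsup\), a shift of the index by a bounded amount does not change the \(n\)th-root asymptotics. By \eqref{eq:Hadamard_product} with \(m+1\) in place of \(m\), the right-hand side has \(\limsup\) equal to \(1/(R_0\cdots R_m)\), which gives ``\(\ge\)''. If \(R_m=\infty\), both sides are \(0\), and the lower bound is vacuous.

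The main obstacle I expect is Step 2's bookkeeping. I must match the weight exponent \(l=n-m\) and the orientation of \(dw\) so that the entries are exactly \(f_{n-m+1+i+j}\), forming the size-\((m+1)\) Hankel block ending near index \(n\). I must also check that the inclusion of the inverted rational class into \(\mathcal M_{k+l,k}(\D)\) is legitimate, in particular when \(\deg q<k\) or \(q\) vanishes at \(0\); the factors \(w^{k-\deg q}\) are simply absorbed into \(P\). The remaining steps are direct citations of earlier results.
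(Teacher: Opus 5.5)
Your proposal is correct and follows essentially the paper's proof: the Saff--Gonchar upper bound, then $|\det K_{n+1,m+1}(f)|\le\prod_{k=0}^{m}s_k(A_{F,\D})$ via \eqref{eq:PP_det} with monomials, the weighted AAK identity with $l=n-m$, the inclusion of the inverted class into $\mathcal M_{k+l,k}(\D)$, and Hadamard's theorem. Your $w^i$ in place of the paper's $t^{m-i}$ is a simultaneous reversal of rows and columns, so your entries $f_{n-m+1+i+j}$ give exactly $\det K_{n+1,m+1}(f)$ with $c_m=1$ and no index shift; one small slip is that the factor $w^{k-\deg q}$ sits in $Q(w)=w^{k}q(1/w)$ rather than in $P$, which is harmless because $\mathcal M_{k+l,k}(\D)$ only requires $\deg Q\le k$.
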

To prove this proposition, we reduce the asymptotic behavior of the product
\[
\prod_{k=0}^{m}\rho_{n-m+k,k}(f;\overline{\D})
\]
to singular-number and Hankel-determinant asymptotics, and then apply Hadamard's theorem.

Consider the inversion $z\mapsto 1/z$ and define
\[
F(z):=f\!\left(\frac{1}{z}\right).
\]
Then $F$ is analytic in a neighborhood of $\infty$ and has the Laurent expansion
\[
F(z)=\sum_{k=0}^{\infty} f_k z^{-k}.
\]
Since adding or subtracting a constant does not affect best approximation errors, we may assume without loss of generality that $f(0)=0$, and hence $F(\infty)=0$.

For integers $N,M\ge0$, recall that
\[
\mathcal R_{N,M}
=
\left\{
r:\ r(z)=\frac{p(z)}{q(z)},\ 
\deg p\le N,\ \deg q\le M,\ q\not\equiv0
\right\}.
\]
We also introduce the class
\[
\mathcal R^*_{N,M}
=
\left\{
r:\ r(z)=\frac{p(z)}{z^{\,N-M}q(z)},\ 
\deg p\le N,\ \deg q\le M,\ q\not\equiv0
\right\},
\qquad N,M\ge0.
\]
For each \(k\) with \(0\le k\le m\), the inversion \(z\mapsto 1/z\) maps \(\mathcal R_{n-m+k,k}\) onto \(\mathcal R^*_{n-m+k,k}\).

For $R>0$, set
\[
K_R:=\{z\in\C:\ |z|\ge R\}.
\]
In particular,
\[
K_1=\C\setminus\D.
\]
Since \(m\) is fixed and \(n\to\infty\), we may ignore finitely many initial terms and therefore assume that \(n\ge m\) in what follows. Recalling that $F(z)=f(1/z)$ on $K_1$, we have, for $0\le k\le m$,
\[
\rho_{n-m+k,k}(f;\overline{\D})
=
\rho^*_{n-m+k,k}(F;K_1),
\]
where
\[
\rho^*_{N,M}(F;K_1)
:=
\inf_{r\in\mathcal R^*_{N,M}}\|F-r\|_{K_1},
\qquad
\|g\|_{K_1}:=\sup_{z\in K_1}|g(z)|.
\]
Consequently,
\[
\prod_{k=0}^{m}\rho_{n-m+k,k}(f;\overline{\D})
=
\prod_{k=0}^{m}\rho^*_{n-m+k,k}(F;K_1).
\]
Therefore, it suffices to prove that
\[
\limsup_{n\to\infty}
\left(
\prod_{k=0}^{m}\rho^*_{n-m+k,k}(F;K_1)
\right)^{1/n}
=
\frac{1}{R_0R_1\cdots R_m}.
\]
We now specialize the Hankel-operator setup of Subsection~\ref{sec:aak} to the case \(G=\D\), \(\Gamma=\T=\{z\in\C:\ |z|=1\}\). Let \(L_{2,l}(\T)\) be the weighted space on \(\T\) with inner product
\[
\langle \varphi,\psi\rangle_{2,l}   
:=
\int_{\T}\varphi(t)\overline{\psi(t)}\,|t|^l\,|dt|
=
\int_{\T}\varphi(t)\overline{\psi(t)}\,|dt|.
\]
Let \(E_2(\D)\) be the Smirnov class on \(\D\), viewed as a subspace of \(L_{2,l}(\T)\). In the unit disc, \(E_2(\D)\) coincides with the classical Hardy space \(H^2(\D)\), although we retain the weighted notation \(L_{2,l}(\T)\) and \(\langle\cdot,\cdot\rangle_{2,l}\) in order to keep the connection with the general setup visible.

Since $F$ is continuous on \(\T\), we consider the corresponding Hankel operator $A_{F,\D}$ in the weighted $L_2$-setting, and write
\[
s_k(A_{F,\D}),
\qquad k=0,1,2,\dots,
\]
for its singular numbers, arranged in nonincreasing order and repeated according to multiplicity.

Recall that, for each \(k\ge0\), \(\mathcal M_{k+l,k}(\D)\) denotes the class of functions
representable on \(\T\) in the form
\[
h=\frac{p}{q\,t^l},
\qquad p\in E_\infty(\D),\ \deg q\le k,\ q\not\equiv0,
\]
and that
\[
\Delta_{k+l,k}(F;\T):=\inf_{h\in\mathcal M_{k+l,k}(\D)}\|F-h\|_\infty.
\]
Then
\[
s_k(A_{F,\D})=\Delta_{k+l,k}(F;\T), \qquad k\ge 0,
\]
by the weighted AAK theorem.

\subsection{The determinantal upper bound and the proof of Proposition~\ref{prop:disc_product}}\label{subsec:disc_det_product}

For integers $m\ge0$ and $n\ge m$, consider the $(m+1)\times(m+1)$ Hankel matrix
\[
K_{n+1,m+1}(f):=\bigl(f_{n+m+1-i-j}\bigr)_{i,j=0}^{m}.
\]
Using the Laurent expansion of \(F(z)=f(1/z)\) and Cauchy's formula on the positively oriented circle \(\T\), we have
\[
f_k=\frac{1}{2\pi i}\int_{\T}F(t)\,t^{k-1}\,dt.
\]
Hence
\begin{equation}\label{eq:Hankel_as_form_det_disc}
\det K_{n+1,m+1}(f)
=
\left(\frac{1}{2\pi i}\right)^{m+1}
\det\left(
\int_{\T}(t^{m-i}t^{m-j})\,F(t)\,t^{n-m}\,dt
\right)_{i,j=0}^{m}.
\end{equation}
Applying the determinantal inequality \eqref{eq:PP_det} with \(l=n-m\) and the  functions
\[
\varphi_i(t)=t^{m-i},
\qquad i=0,1,\dots,m,
\]
we obtain
\begin{equation}\label{eq:Hankel_upper_via_singular_disc}
\left|
\det\left(
\int_{\T}(t^{m-i}t^{m-j})\,F(t)\,t^{n-m}\,dt
\right)_{i,j=0}^{m}
\right|
\le
\Bigl(\prod_{k=0}^{m}s_k(A_{F,\D})\Bigr)\,
\det\Bigl(\langle t^{m-i},t^{m-j}\rangle_{2,n-m}\Bigr)_{i,j=0}^{m}.
\end{equation}
Since $|t|=1$ on $\T$, the weighted inner product \(\langle\cdot,\cdot\rangle_{2,n-m}\) coincides with the usual $L_2(\T)$ inner product, and the Gram matrix is diagonal. Therefore,
\[
\det\Bigl(\langle t^{m-i},t^{m-j}\rangle_{2,n-m}\Bigr)_{i,j=0}^{m}
=(2\pi)^{m+1}.
\]
Combining this with \eqref{eq:Hankel_as_form_det_disc} and \eqref{eq:Hankel_upper_via_singular_disc}, we obtain
\[
|\det K_{n+1,m+1}(f)|
\le
\prod_{k=0}^{m}s_k(A_{F,\D}).
\]

By the weighted AAK identity in the disc,
\[
s_k(A_{F,\D})=\Delta_{n-m+k,k}(F;\T),
\qquad k=0,1,\dots,m,
\]
and hence
\[
|\det K_{n+1,m+1}(f)|
\le
\prod_{k=0}^{m}\Delta_{n-m+k,k}(F;\T).
\]

Moreover, for $0\le k\le m$,
\[
\Delta_{n-m+k,k}(F;\T)\le \rho^{*}_{n-m+k,k}(F;K_1),
\]
since $\mathcal R^{*}_{n-m+k,k}\subset \mathcal M_{n-m+k,k}(\D)$ and $\T\subset K_1$. Using
\[
\rho_{n-m+k,k}(f;\overline{\D})
=
\rho^{*}_{n-m+k,k}(F;K_1),
\qquad 0\le k\le m,
\]
we obtain
\begin{equation}\label{eq:Hankel_det_vs_rho_disc}
|\det K_{n+1,m+1}(f)|
\le
\prod_{k=0}^{m}\rho_{n-m+k,k}(f;\overline{\D}).
\end{equation}

Hadamard's theorem gives
\[
\limsup_{n\to\infty}|\det K_{n+1,m+1}(f)|^{1/n}
=
\frac{1}{R_0R_1\cdots R_m}.
\]
Moreover, by the Saff--Gonchar theorem, for each fixed \(m\),
\[
\limsup_{n\to\infty}
\left(
\prod_{k=0}^{m}\rho_{n-m+k,k}(f;\overline{\D})
\right)^{1/n}
\le
\frac{1}{R_0R_1\cdots R_m}.
\]
Together with the inequality \eqref{eq:Hankel_det_vs_rho_disc}, this yields
\[
\limsup_{n\to\infty}
\left(
\prod_{k=0}^{m}\rho_{n-m+k,k}(f;\overline{\D})
\right)^{1/n}
=
\frac{1}{R_0R_1\cdots R_m}.
\]
This proves Proposition~\ref{prop:disc_product}.
\subsection{Proof of Theorem~\ref{thm:two} in the disc case}\label{subsec:disc_scaled_product}

We now pass from Proposition~\ref{prop:disc_product}, which treats the case $R=1$, to arbitrary $R$ with $1\le R<R_0$.

Fix $R$ with $1\le R<R_0$, and define
\[
g(z):=f(Rz).
\]
Then $g$ is analytic at $0$. For each $k\ge0$, let \(R_k(g)\) denote the radius of meromorphic continuation of \(g\) with at most \(k\) poles. Then
\[
R_k(g)=\frac{R_k}{R}.
\]
Indeed, $g$ admits a meromorphic continuation to $|z|<\rho$ with at most $k$ poles if and only if $f$ admits a meromorphic continuation to $|z|<R\rho$ with at most $k$ poles.

Next, for every $n,m\ge0$,
\[
\rho_{n,m}(g;\overline{\D})=\rho_{n,m}(f;\overline{D}_R).
\]
This follows by the change of variables \(w=Rz\). Indeed, if \(r\in\mathcal R_{n,m}\), then
\[
\|g-r\|_{\overline{\D}}
=
\sup_{|z|\le1}|f(Rz)-r(z)|
=
\sup_{|w|\le R}\left|f(w)-r\!\left(\frac{w}{R}\right)\right|.
\]
Since \(r(w/R)\in\mathcal R_{n,m}\), taking the infimum over \(r\in\mathcal R_{n,m}\) gives the desired relation.

Therefore, applying Proposition~\ref{prop:disc_product} to the function $g$, we obtain
\[
\limsup_{n\to\infty}
\left(\prod_{k=0}^{m}\rho_{n-m+k,k}(g;\overline{\D})\right)^{1/n}
=
\frac{1}{R_0(g)R_1(g)\cdots R_m(g)}.
\]
Using the two identities above, this becomes
\[
\limsup_{n\to\infty}
\left(\prod_{k=0}^{m}\rho_{n-m+k,k}(f;\overline{D}_R)\right)^{1/n}
=
\frac{1}{(R_0/R)(R_1/R)\cdots (R_m/R)}
=
\frac{R^{m+1}}{R_0R_1\cdots R_m}.
\]
Thus, in the disc case,
\[
\limsup_{n\to\infty}
\left(\prod_{k=0}^{m}\rho_{n-m+k,k}(f;\overline{D}_R)\right)^{1/n}
=
\frac{R^{m+1}}{R_0R_1\cdots R_m},
\qquad 1\le R<R_0,
\]
which proves the disc-case form of Theorem~\ref{thm:two}.


\subsection{Proof of Theorem~\ref{thm:hankel-disc-quotient-R}}

We begin with  $R=1$, that is, $\overline{D}_1=\overline{\D}$. By \eqref{eq:Hankel_det_vs_rho_disc},
\[
|\det K_{n+1,m+1}(f)|
\le
\prod_{k=0}^{m}\rho_{n-m+k,k}(f;\overline{\D}).
\]
Dividing by \(\prod_{k=0}^{m}\rho_{n-m+k,k}(f;\overline{\D})\), taking \(n\)th roots, and passing to the limit superior, we obtain
\[
\limsup_{n\to\infty}
\left(
\frac{|\det K_{n+1,m+1}(f)|}
{\displaystyle\prod_{k=0}^{m}\rho_{n-m+k,k}(f;\overline{\D})}
\right)^{1/n}
\le 1.
\]

Hadamard's theorem gives
\begin{equation}\label{eq:hadamard_repeat_disc}
\limsup_{n\to\infty}|\det K_{n+1,m+1}(f)|^{1/n}
=
\frac{1}{R_0R_1\cdots R_m},
\end{equation}
while Proposition~\ref{prop:disc_product} yields
\begin{equation}\label{eq:disc_product_repeat_R1}
\limsup_{n\to\infty}
\left(
\prod_{k=0}^{m}\rho_{n-m+k,k}(f;\overline{\D})
\right)^{1/n}
=
\frac{1}{R_0R_1\cdots R_m}.
\end{equation}
By the elementary inequality
\[
\limsup_{n\to\infty}\frac{a_n}{b_n}
\ge
\frac{\limsup_{n\to\infty}a_n}{\limsup_{n\to\infty}b_n},
\qquad a_n>0,\ b_n>0,
\]
together with \eqref{eq:hadamard_repeat_disc} and \eqref{eq:disc_product_repeat_R1}, we obtain the reverse inequality
\[
\limsup_{n\to\infty}
\left(
\frac{|\det K_{n+1,m+1}(f)|}
{\displaystyle\prod_{k=0}^{m}\rho_{n-m+k,k}(f;\overline{\D})}
\right)^{1/n}
\ge 1.
\]
Hence
\begin{equation}\label{eq:disc_hankel_quotient_R1}
\limsup_{n\to\infty}
\left(
\frac{|\det K_{n+1,m+1}(f)|}
{\displaystyle\prod_{k=0}^{m}\rho_{n-m+k,k}(f;\overline{\D})}
\right)^{1/n}
=1.
\end{equation}

We now pass to general $R$, $1<R<R_0$, and 
define
\[
g(z):=f(Rz).
\]
Then, for each $k\ge0$,
\[
R_k(g)=\frac{R_k}{R},
\]
and, as in the proof of Proposition~\ref{prop:disc_product},
\[
\rho_{n,m}(g;\overline{\D})=\rho_{n,m}(f;\overline{D}_R).
\]
Moreover, the Taylor coefficients of $g$ are $g_j=f_jR^j$, and therefore
\[
K_{n+1,m+1}(g)
=
\bigl(f_{n+m+1-i-j}R^{\,n+m+1-i-j}\bigr)_{i,j=0}^{m}.
\]
Factoring $R^{n+m+1-i}$ from the $i$th row and $R^{-j}$ from the $j$th column, we obtain
\[
\det K_{n+1,m+1}(g)=R^{(m+1)(n+1)}\det K_{n+1,m+1}(f).
\]
Hence
\begin{equation}\label{eq:disc_hankel_scaling_identity}
\frac{|\det K_{n+1,m+1}(g)|}
{\displaystyle\prod_{k=0}^{m}\rho_{n-m+k,k}(g;\overline{\D})}
=
\frac{R^{(m+1)(n+1)}|\det K_{n+1,m+1}(f)|}
{\displaystyle\prod_{k=0}^{m}\rho_{n-m+k,k}(f;\overline{D}_R)}.
\end{equation}
Applying \eqref{eq:disc_hankel_quotient_R1} to $g$, we obtain
\[
\limsup_{n\to\infty}
\left(
\frac{|\det K_{n+1,m+1}(g)|}
{\displaystyle\prod_{k=0}^{m}\rho_{n-m+k,k}(g;\overline{\D})}
\right)^{1/n}
=1.
\]
Using \eqref{eq:disc_hankel_scaling_identity}, this yields
\[
\limsup_{n\to\infty}
\left(
\frac{|\det K_{n+1,m+1}(f)|}
{\displaystyle\prod_{k=0}^{m}\rho_{n-m+k,k}(f;\overline{D}_R)}
\right)^{1/n}
=
\frac{1}{R^{m+1}}.
\]
This proves Theorem~\ref{thm:hankel-disc-quotient-R}.


\subsection{Comparison of products on two circles}\label{subsec:two-circle-comparison}

We shall need two comparison results: one for meromorphic approximation on two concentric circles, and one between meromorphic and rational approximation errors.
The argument is similar in spirit to the comparison method used in \cite{KouPro}.

For \(r>0\), set
\[
T_r:=\{z\in\C:\ |z|=r\}.
\]
Let
\[
\frac{1}{R_0}<\rho<r<1.
\]
Fix \(l=n-m\). We consider the weighted Hankel operators on the discs \(D_r\) and \(D_\rho\), with boundaries \(T_r\) and \(T_\rho\), associated with \(F\), as in Section~\ref{sec:hadamard}.

By the weighted AAK theorem, the corresponding singular numbers
coincide with the meromorphic approximation errors on these circles. More precisely,
\[
s_k(A_{F,D_r})
=
\Delta_{n-m+k,k}(F;T_r),
\qquad
s_k(A_{F,D_\rho})
=
\Delta_{n-m+k,k}(F;T_\rho),
\]
for \(k=0,1,\dots,m\).

We now compare the corresponding products.

\begin{lemma}\label{lem:circle-comparison}
Let
\[
\frac1{R_0}<\rho<r<1.
\]
Then, for every fixed integer \(m\ge0\), there exists a constant \(c(r,\rho)>0\), independent of \(n\), such that
\[
\prod_{k=0}^{m}\Delta_{n-m+k,k}(F;T_r)
\le
c(r,\rho)\left(\frac{\rho}{r}\right)^{n(m+1)}
\prod_{k=0}^{m}\Delta_{n-m+k,k}(F;T_\rho).
\]
\end{lemma}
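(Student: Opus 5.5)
The plan is to compare both products through the bilinear Hankel forms of Section~\ref{sec:hadamard}, with the fixed weight index \(l=n-m\). First, on the disc \(D_r\), I take an orthonormal system \(\alpha_0,\dots,\alpha_m\in E_2(D_r)\) of eigenfunctions of \(B_{F,D_r}\), orthonormal in \(L_{2,l}(T_r)\). By the weighted AAK identity and the determinantal identity \eqref{eq:det_identity},
\[
\prod_{k=0}^{m}\Delta_{n-m+k,k}(F;T_r)=\prod_{k=0}^m s_k(A_{F,D_r})
=\det\left(\int_{T_r}\alpha_i(t)\alpha_j(t)F(t)\,t^{l}\,dt\right)_{i,j=0}^{m}.
\]

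\textbf{Moving the contour.} The function \(F(z)=f(1/z)\) is analytic for \(|z|>1/R_0\), so it is analytic on a neighbourhood of the closed annulus \(\rho\le|z|\le r\). Each \(\alpha_i\alpha_j t^l\) is analytic in \(D_r\), and \(t^l\) is a polynomial since \(l\ge0\). By Cauchy's theorem, every entry of the determinant equals the same integral over \(T_\rho\).

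I expect this deformation to be the main technical point, because the \(\alpha_i\) are only \(E_2\) on \(T_r\). I would handle it by integrating first over \(T_s\) with \(\rho\le s<r\), where Cauchy's theorem applies directly, and then letting \(s\uparrow r\). The passage to the limit uses \(L_2\) convergence of dilations of Hardy functions: \(\alpha_i\alpha_j\in E_1(D_r)\) and \(F\) is continuous near \(T_r\).

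\textbf{Applying the determinantal inequality on \(D_\rho\).} The restrictions \(\varphi_i=\alpha_i|_{D_\rho}\) lie in \(E_2(D_\rho)\). Inequality \eqref{eq:PP_det} on \(G=D_\rho\) with the same \(l\) gives
\[
\prod_{k=0}^{m}\Delta_{n-m+k,k}(F;T_r)\le \prod_{k=0}^m s_k(A_{F,D_\rho})\,
\det\Bigl(\int_{T_\rho}\alpha_i\overline{\alpha_j}\,\rho^{l}\,|dt|\Bigr)_{i,j=0}^m .
\]
Here \(s_k(A_{F,D_\rho})=\Delta_{n-m+k,k}(F;T_\rho)\) by the weighted AAK theorem. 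It remains to bound the Gram determinant.

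\textbf{Bounding the Gram determinant.} Put \(\beta_i:=r^{l/2}\alpha_i\). Since \(|t|^l=r^l\) on \(T_r\), the \(\beta_i\) are orthonormal in the unweighted \(L_2(T_r,|dt|)\). The Gram matrix on \(T_\rho\) then equals \((\rho/r)^{l}G\), where \(G=(\int_{T_\rho}\beta_i\overline{\beta_j}\,|dt|)_{i,j=0}^m\).

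For \(h=\sum_k a_kz^k\in H^2(D_r)\) one has
\[
\int_{T_\rho}|h|^2|dt|=2\pi\rho\sum_k|a_k|^2\rho^{2k}\le\frac{\rho}{r}\,2\pi r\sum_k|a_k|^2r^{2k}=\frac{\rho}{r}\int_{T_r}|h|^2|dt|.
\]
Hence the Hermitian form \(c\mapsto c^*Gc\) is at most \((\rho/r)|c|^2\), because \(\sum_i c_i\beta_i\) has norm \(|c|\) on \(T_r\). So \(G\) is positive semidefinite with all eigenvalues at most \(\rho/r\), and \(\det G\le(\rho/r)^{m+1}\).

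Combining the two estimates gives
\[
\prod_{k=0}^{m}\Delta_{n-m+k,k}(F;T_r)\le\left(\frac{\rho}{r}\right)^{(l+1)(m+1)}\prod_{k=0}^{m}\Delta_{n-m+k,k}(F;T_\rho).
\]
Since \(l+1=n-m+1\), the prefactor equals \((\rho/r)^{n(m+1)}\,(r/\rho)^{(m-1)(m+1)}\). This is the claim with \(c(r,\rho)=(r/\rho)^{m^2-1}\), which does not depend on \(n\). Any convenient larger constant, for example \((r/\rho)^{m(m+1)}\), also works.
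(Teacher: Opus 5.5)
Your proof is correct and follows the paper's argument step for step: the determinantal identity \eqref{eq:det_identity} on $D_r$, Cauchy's theorem to move the bilinear form to $T_\rho$, inequality \eqref{eq:PP_det} on $D_\rho$, the weighted AAK identity, and a bound on the Gram determinant. The differences are refinements only. You justify the contour shift for $E_2$ boundary values by integrating over $T_s$ and letting $s\uparrow r$, and you bound the Gram determinant through the form inequality $G\le(\rho/r)I$ instead of the paper's entrywise Cauchy--Schwarz estimate with monotone integral means; this gives the explicit constant $(r/\rho)^{m^2-1}$ in place of an unspecified $c_0(r,\rho)$.
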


\begin{proof}
For the discs \(D_r\) and \(D_\rho\), with boundaries \(T_r\) and \(T_\rho\), respectively, we
consider the weighted Hankel operators corresponding to the parameter \(l=n-m\).
Applying \eqref{eq:det_identity} with \(G=D_r\), symbol \(F\), and \(l=n-m\), we obtain
\[
\prod_{k=0}^{m}s_k\bigl(A_{F,D_r}\bigr)
=
\det\!\left(
\int_{T_r}\alpha_i(t)\alpha_j(t)F(t)t^{\,n-m}\,dt
\right)_{i,j=0}^{m},
\]
where \(\{\alpha_i\}_{i=0}^{m}\subset E_2(D_r)\) is the orthonormal system corresponding to the first
\(m+1\) singular numbers of \(A_{F,D_r}\).

\medskip
\noindent
We now transfer this determinant from \(T_r\) to \(T_\rho\). Since \(\alpha_i,\alpha_j\in E_2(D_r)\) are holomorphic in \(D_r\), and \(F\) is holomorphic in
\(|z|>1/R_0\), the integrand is holomorphic in the annulus
\[
\{z\in\C:\rho\le |z|\le r\}.
\]
Hence Cauchy's theorem gives
\[
\int_{T_r}\alpha_i(t)\alpha_j(t)F(t)t^{\,n-m}\,dt
=
\int_{T_\rho}\alpha_i(t)\alpha_j(t)F(t)t^{\,n-m}\,dt.
\]
Next, we apply the determinantal inequality \eqref{eq:PP_det} on \(D_\rho\). This gives
\begin{equation}\label{eq:circle_compare_singular}
\prod_{k=0}^{m}s_k\bigl(A_{F,D_r}\bigr)
\le
\prod_{k=0}^{m}s_k\bigl(A_{F,D_\rho}\bigr)
\det\!\left(\langle \alpha_i,\alpha_j\rangle_{2,n-m}\right)_{i,j=0}^{m},
\end{equation}
where \(\langle\cdot,\cdot\rangle_{2,n-m}\) denotes the weighted \(L_2\) inner product on \(T_\rho\) corresponding
to the parameter \(l=n-m\).

We now estimate the Gram determinant on the right. Since \(|t|=\rho\) on \(T_\rho\), each entry of the
Gram matrix has the form
\[
\langle \alpha_i,\alpha_j\rangle_{2,n-m}
=
\int_{T_\rho}\alpha_i(t)\overline{\alpha_j(t)}\,|t|^{\,n-m}\,|dt|
=
\rho^{\,n-m}\int_{T_\rho}\alpha_i(t)\overline{\alpha_j(t)}\,|dt|.
\]
Therefore, the common factor \(\rho^{\,n-m}\) may be factored out of each row of the Gram matrix, and
hence
\[
\det\!\left(\langle \alpha_i,\alpha_j\rangle_{2,n-m}\right)_{i,j=0}^{m}
=
\rho^{(n-m)(m+1)}
\det\!\left(
\int_{T_\rho}\alpha_i(t)\overline{\alpha_j(t)}\,|dt|
\right)_{i,j=0}^{m}.
\]

To estimate the remaining determinant, we use the fact that the functions
\(\alpha_0,\dots,\alpha_m\) are orthonormal in \(E_2(D_r)\) with respect to the weighted inner
product corresponding to \(l=n-m\). Since \(|t|=r\) on \(T_r\), this orthonormality means that
\[
\int_{T_r}\alpha_i(t)\overline{\alpha_j(t)}\,|t|^{\,n-m}\,|dt|
=
\delta_{ij},
\qquad i,j=0,1,\dots,m,
\]
and therefore
\[
r^{\,n-m}\int_{T_r}\alpha_i(t)\overline{\alpha_j(t)}\,|dt|
=
\delta_{ij}.
\]
In particular,
\[
\int_{T_r}|\alpha_i(t)|^2\,|dt|=r^{-(n-m)},
\qquad i=0,1,\dots,m.
\]
Since \(\alpha_i\) is holomorphic in \(D_r\), its integral means are nondecreasing with the radius. Hence
\[
\int_{T_\rho}|\alpha_i(t)|^2\,|dt|
\le
\int_{T_r}|\alpha_i(t)|^2\,|dt|
=r^{-(n-m)},
\qquad i=0,1,\dots,m.
\]
By the Cauchy--Schwarz inequality, each entry of the matrix
\[
\left(
\int_{T_\rho}\alpha_i(t)\overline{\alpha_j(t)}\,|dt|
\right)_{i,j=0}^{m}
\]
is bounded in absolute value by \(r^{-(n-m)}\). Since \(m\) is fixed, its determinant is bounded by
\[
c_0(r,\rho)\,r^{-(n-m)(m+1)}.
\]
Substituting this into the preceding identity, we obtain
\[
\det\!\left(\langle \alpha_i,\alpha_j\rangle_{2,n-m}\right)_{i,j=0}^{m}
\le
c_0(r,\rho)\left(\frac{\rho}{r}\right)^{(n-m)(m+1)}.
\]
Since
\[
(n-m)(m+1)=n(m+1)-m(m+1),
\]
the fixed factor \(\left(\frac{\rho}{r}\right)^{-m(m+1)}\) may be absorbed into the constant. Thus
\[
\det\!\left(\langle \alpha_i,\alpha_j\rangle_{2,n-m}\right)_{i,j=0}^{m}
\le
c(r,\rho)\left(\frac{\rho}{r}\right)^{n(m+1)}.
\]
Combining this with \eqref{eq:circle_compare_singular}, we get
\[
\prod_{k=0}^{m}s_k\bigl(A_{F,D_r}\bigr)
\le
c(r,\rho)\left(\frac{\rho}{r}\right)^{n(m+1)}
\prod_{k=0}^{m}s_k\bigl(A_{F,D_\rho}\bigr).
\]

Finally, by the weighted AAK identity,
\[
s_k(A_{F,D_r})=\Delta_{n-m+k,k}(F;T_r),
\qquad
s_k(A_{F,D_\rho})=\Delta_{n-m+k,k}(F;T_\rho),
\qquad k=0,1,\dots,m.
\]
Therefore
\[
\prod_{k=0}^{m}\Delta_{n-m+k,k}(F;T_r)
\le
c(r,\rho)\left(\frac{\rho}{r}\right)^{n(m+1)}
\prod_{k=0}^{m}\Delta_{n-m+k,k}(F;T_\rho),
\]
which proves the lemma.
\end{proof}

We shall also need a comparison between meromorphic and rational approximation errors. Recall that
\[
K_r:=\{z\in\C:\ |z|\ge r\}.
\]

\begin{lemma}\label{lem:rho-star-comparison}
Let
\[
\frac1{R_0}<\rho<r<1.
\]
Then, for every fixed integer $m\ge 0$, there exists a constant $C(r,\rho)>0$,
independent of $n$, such that
\[
\prod_{k=0}^{m}\rho^*_{n-m+k,k}(F;K_1)
\le
C(r,\rho)\left(\frac{\rho}{r}\right)^{n(m+1)}
\prod_{k=0}^{m}\rho^*_{n-m+k,k}(F;K_\rho).
\]
\end{lemma}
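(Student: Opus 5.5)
We obtain the lemma by sandwiching Lemma~\ref{lem:circle-comparison} between two one-sided comparisons of errors:
\[
\rho^*_{n-m+k,k}(F;K_1)\le C_1(r)\,\Delta_{n-m+k,k}(F;T_r)
\qquad\text{and}\qquad
\Delta_{n-m+k,k}(F;T_\rho)\le \rho^*_{n-m+k,k}(F;K_\rho),
\]
for $k=0,\dots,m$. Multiplying over $k$ and inserting Lemma~\ref{lem:circle-comparison} in the middle then gives the claim with $C(r,\rho)=C_1(r)^{m+1}c(r,\rho)$.

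\emph{The second inequality.} This is the easy one and repeats the argument of Subsection~\ref{subsec:disc_det_product}. Every $r\in\mathcal R^*_{n-m+k,k}$ has the form $p/(t^{n-m}q)$ with $\deg q\le k$. The numerator $p$ is a polynomial and hence lies in $E_\infty(D_\rho)$. Therefore $\mathcal R^*_{n-m+k,k}\subset\mathcal M_{n-m+k,k}(D_\rho)$ with weight exponent $l=n-m$. Since also $T_\rho\subset K_\rho$, we get $\|F-r\|_{T_\rho}\le\|F-r\|_{K_\rho}$. Taking the infimum over $r$ gives the inequality.

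\emph{The first inequality.} This requires turning a meromorphic approximant on $T_r$ into a genuine element of $\mathcal R^*$. Fix $\varepsilon>0$ and take $h=p/(q\,t^{n-m})\in\mathcal M_{n-m+k,k}(D_r)$ with $\|F-h\|_{T_r}\le\Delta_{n-m+k,k}(F;T_r)+\varepsilon$.
\begin{itemize}
\item We may assume $q$ has no zeros on $T_r$. A zero on $T_r$ is harmless if it is cancelled by $p$; otherwise $h$ is unbounded near it, which is impossible for a near-best approximant in $L_\infty$. If needed, one can also perturb to a slightly smaller radius.
\item Write $q=q_1q_2$, where $q_1$ has its zeros in $D_r$ and $q_2$ has its zeros outside $\overline D_r$. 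The factor $1/q_2$ is then absorbed into the analytic part $p/q_2\in E_\infty(D_r)$.
\item Define the rational function $\tilde r$ by the exterior Cauchy projection
\[
\tilde r(z):=\frac{1}{2\pi i}\int_{T_r}\frac{h(t)}{z-t}\,dt,\qquad |z|>r,
\]
and extend it analytically to all of $\C$ apart from its poles. This $\tilde r$ is the sum of the principal parts of $h$ in $D_r$, at $0$ and at the zeros of $q_1$. Hence $\tilde r=P/(t^{n-m}q_1)$ with $\deg P<n-m+\deg q_1$, so $\tilde r\in\mathcal R^*_{n-m+k,k}$.
\item Since $F$ is analytic in $|z|>1/R_0$ and $F(\infty)=0$, Cauchy's formula on $T_r$ reproduces $F$ for $|z|>r$. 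Subtracting,
\[
F(z)-\tilde r(z)=\frac{1}{2\pi i}\int_{T_r}\frac{F(t)-h(t)}{z-t}\,dt,\qquad |z|\ge1 .
\]
\item Because $|z-t|\ge 1-r$ for $|z|\ge1$ and $|t|=r$, this yields $\|F-\tilde r\|_{K_1}\le \frac{r}{1-r}\|F-h\|_{T_r}$.
\end{itemize}
Letting $\varepsilon\to0$ gives the first inequality with $C_1(r)=r/(1-r)$.

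The main obstacle is this projection step: verifying that the principal-part function really lies in $\mathcal R^*_{n-m+k,k}$, with the correct degree bookkeeping, and handling possible zeros of $q$ on $T_r$. Once that is settled, the rest is just chaining the two inequalities with Lemma~\ref{lem:circle-comparison}.
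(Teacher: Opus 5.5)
Your proposal is correct and follows the paper's proof essentially step for step. The paper does the same three things: it applies the exterior Cauchy projection to an arbitrary \(h\in\mathcal M_{n-m+k,k}(D_r)\) to get \(\rho^*_{n-m+k,k}(F;K_1)\le \frac{r}{1-r}\Delta_{n-m+k,k}(F;T_r)\), chains this with Lemma~\ref{lem:circle-comparison}, and finishes with the inclusion \(\mathcal R^*_{n-m+k,k}\subset\mathcal M_{n-m+k,k}(D_\rho)\). Your added handling of zeros of \(q\) on \(T_r\) and your degree count for the principal-part function just make explicit what the paper states in one line.
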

\begin{proof}
Fix \(m\ge 0\) and \(k\in\{0,1,\dots,m\}\).
Let \(h\in \mathcal M_{n-m+k,k}(D_r)\) be arbitrary. For \(z\in K_1\), define
\[
R(z):=\frac{1}{2\pi i}\int_{T_r}\frac{h(t)\,dt}{z-t}.
\]
Then \(R\in \mathcal R^*_{n-m+k,k}\), since \(h\) is the sum of a function holomorphic in \(D_r\)
and a rational function in the class \(\mathcal R^*_{n-m+k,k}\).

Since \(F\) is analytic in the exterior of \(T_r\) and vanishes at infinity, the Cauchy formula for the exterior gives
\[
F(z)-R(z)
=
\frac{1}{2\pi i}\int_{T_r}\frac{F(t)-h(t)}{z-t}\,dt,
\qquad z\in K_1.
\]
Hence
\[
\|F-R\|_{K_1}
\le
\frac{r}{1-r}\,\|F-h\|_\infty,
\]
where \(\|\cdot\|_\infty\) denotes the \(L_\infty\)-norm on \(T_r\).
Since \(R\in\mathcal R^*_{n-m+k,k}\), it follows that
\[
\rho^*_{n-m+k,k}(F;K_1)
\le
\frac{r}{1-r}\,\|F-h\|_\infty.
\]
Since \(h\in \mathcal M_{n-m+k,k}(D_r)\) was arbitrary, we may take the infimum over all such \(h\) and obtain
\[
\rho^*_{n-m+k,k}(F;K_1)
\le
\frac{r}{1-r}\,\Delta_{n-m+k,k}(F;T_r).
\]
Since this estimate holds for each \(k=0,1,\dots,m\), multiplying over \(k\) we obtain
\[
\prod_{k=0}^{m}\rho^*_{n-m+k,k}(F;K_1)
\le
\left(\frac{r}{1-r}\right)^{m+1}
\prod_{k=0}^{m}\Delta_{n-m+k,k}(F;T_r).
\]
Combining this with Lemma~\ref{lem:circle-comparison}, we obtain
\[
\prod_{k=0}^{m}\rho^*_{n-m+k,k}(F;K_1)
\le
C(r,\rho)\left(\frac{\rho}{r}\right)^{n(m+1)}
\prod_{k=0}^{m}\Delta_{n-m+k,k}(F;T_\rho).
\]
Moreover, for every $k=0,1,\dots,m$, we have
\[
\Delta_{n-m+k,k}(F;T_\rho)
\le
\rho^*_{n-m+k,k}(F;K_\rho),
\]
since $\mathcal R^*_{n-m+k,k}\subset \mathcal M_{n-m+k,k}(D_\rho)$ and $T_\rho\subset K_\rho$.

Therefore,
\[
\prod_{k=0}^{m}\rho^*_{n-m+k,k}(F;K_1)
\le
C(r,\rho)\left(\frac{\rho}{r}\right)^{n(m+1)}
\prod_{k=0}^{m}\rho^*_{n-m+k,k}(F;K_\rho),
\]
which proves the lemma.
\end{proof}


\subsection{Proof of Theorem~\ref{thm:first-main} in the disc case}
\label{subsec:first_main_circle}
We now consider the special case \(E=\overline{\D}\). Let
\[
F(z):=f(1/z).
\]
Fix \(R\) with
\[
1<R<R_0,
\]
and choose \(r\) so that
\[
\frac{1}{R}<r<1.
\]
Applying Lemma~\ref{lem:rho-star-comparison} with \(\rho=1/R\), we obtain
\[
\prod_{k=0}^{m}\rho^*_{n-m+k,k}(F;K_1)
\le
C(r,R)\left(\frac{1}{Rr}\right)^{n(m+1)}
\prod_{k=0}^{m}\rho^*_{n-m+k,k}(F;K_{1/R}),
\]
where \(C(r,R)\) is independent of \(n\).

Taking \(n\)th roots and passing to the limit superior, we get
\[
\limsup_{n\to\infty}
\left(
\prod_{k=0}^{m}
\frac{\rho^*_{n-m+k,k}(F;K_1)}
{\rho^*_{n-m+k,k}(F;K_{1/R})}
\right)^{1/n}
\le
\left(\frac{1}{Rr}\right)^{m+1}.
\]
Since \(r\in(1/R,1)\) is arbitrary, letting \(r\to1^{-}\) yields
\[
\limsup_{n\to\infty}
\left(
\prod_{k=0}^{m}
\frac{\rho^*_{n-m+k,k}(F;K_1)}
{\rho^*_{n-m+k,k}(F;K_{1/R})}
\right)^{1/n}
\le
\frac{1}{R^{m+1}}.
\]
Finally, by inversion, for every \(k=0,1,\dots,m\),
\[
\rho_{n-m+k,k}(f;\overline{\D})=\rho^*_{n-m+k,k}(F;K_1),
\qquad
\rho_{n-m+k,k}(f;\overline{D}_R)=\rho^*_{n-m+k,k}(F;K_{1/R}),
\]
and therefore
\[
\limsup_{n\to\infty}
\left(
\prod_{k=0}^{m}
\frac{\rho_{n-m+k,k}(f;\overline{\D})}
{\rho_{n-m+k,k}(f;\overline{D}_R)}
\right)^{1/n}
\le
\frac{1}{R^{m+1}}.
\]
This proves Theorem~\ref{thm:first-main} in the disc case.






\section{From the disc to continua with analytic boundary and the Jordan-boundary case}
\label{sec:four}

In this section we explain how Theorem~\ref{thm:first-main} extends to the case where
\(E\) is a continuum with at least two points and connected complement.
We first treat the case in which \(E\) has analytic boundary.
The starting point is the exterior conformal map and the associated Faber expansion.
This makes it possible to relate approximation problems on the level curves \(L_R\) to the corresponding problems on the circles \(T_R\).
The proof of Theorem~\ref{thm:first-main} is given in Subsection~\ref{subsec:proof-first-main-general}.
We then prove Theorem~\ref{thm:two} first in the analytic-boundary case and then pass to the general Jordan-boundary case.
The proofs of Theorems~\ref{thm:equality-case} and~\ref{thm:common-subsequence} are given
in the final two subsections.


\subsection{Conformal reduction and Faber expansion}
\label{subsec:conformal-reduction}

Let \(E\subset\C\) be a continuum with at least two points and connected complement.
Let
\[
\Phi:\Omega=\widehat{\C}\setminus E\to\{w\in\widehat{\C}:|w|>1\}
\]
be the conformal map normalized by
\[
\Phi(\infty)=\infty,
\qquad
\Phi'(\infty)>0,
\]
and let
\[
\Psi:=\Phi^{-1}.
\]
Then
\begin{equation}\label{eq:green-phi}
g_\Omega(z,\infty)=\log |\Phi(z)|,
\qquad z\in\Omega.
\end{equation}
Here \(g_\Omega(\cdot,\infty)\) denotes the Green function of \(\Omega\) with pole at
\(\infty\).

For \(R\ge 1\), we recall
\[
E_R:=\{z\in\C:\ g_\Omega(z,\infty)\le \log R\},
\]
and for \(R>1\),
\[
L_R:=\{z\in\Omega:\ g_\Omega(z,\infty)=\log R\}.
\]
Since \eqref{eq:green-phi}, we also have
\[
L_R=\{z\in\Omega:|\Phi(z)|=R\},
\qquad R>1.
\]
For \(R>1\), we set
\[
\Omega_R:=\widehat{\C}\setminus E_R=\{z\in\Omega:|\Phi(z)|>R\},
\qquad
A_R:=\{w\in\widehat{\C}:|w|>R\}.
\]
Thus \(A_R\) and \(\Omega_R\) are the exterior domains containing \(\infty\)
bounded by \(T_R\) and \(L_R\), respectively.
The map \(\Psi\) sends \(T_R\) onto \(L_R\) and \(A_R\) conformally onto \(\Omega_R\).

We now assume in addition that \(E\) has analytic boundary. Then \(\partial E\) is an analytic Jordan curve, and \(\Psi\) extends analytically and univalently to a neighborhood of the unit circle. Hence there exists \(r_0<1\) such that \(\Psi\) is analytic and univalent in
\[
\{w\in\C:|w|>r_0\};
\]
see \cite{Saff1971}.

Let \(F_n\) be the Faber polynomials associated with \(E\). Since \(f\) is analytic
in a neighborhood of \(E\), it admits a Faber expansion
\[
f(z)=\sum_{n=0}^{\infty} a_n F_n(z),
\]
which converges in a neighborhood of \(E\).
Passing to the \(w\)-plane via \(z=\Psi(w)\), we obtain
\[
f(\Psi(w))
=
\sum_{n=0}^{\infty} a_n F_n(\Psi(w)).
\]
We write
\[
f(\Psi(w))=g(w)+h(w),
\]
where
\[
g(w):=\sum_{n=0}^{\infty} a_n w^n
\]
and
\[
h(w):=f(\Psi(w))-g(w).
\]

By the standard properties of Faber polynomials, the function \(g\) is analytic in
\(D_{R_0}\), while the function \(h\) is analytic in 
\[
\{w\in\C:|w|>r_0\}
\]
and satisfies
\[
h(\infty)=0.
\]
Thus the meromorphic continuation and approximation properties of \(f\) reduce
to those of \(g\), up to the analytic term \(h\). In particular, the meromorphy
radii \(R_k\) introduced for \(f\) coincide with the corresponding meromorphy
radii for \(g\) in the disc variable.

The notation introduced above will be used throughout this section.

\subsection{Meromorphic approximation on circles and level curves}
\label{subsec:meromorphic-comparison}

In this subsection we introduce the exterior meromorphic approximation problems
needed to pass from circles \(T_R\) to level curves \(L_R\). We keep the notation
\(\Delta\) from the bounded-domain setting considered earlier, where \(0\) lies in
the bounded complementary component. These are the same meromorphic approximation
problems as in that setting, but written in exterior form. They are reduced to the
bounded-domain case by the inversion \(z\mapsto 1/z\).

\medskip

\noindent
Let \(E\subset\C\) be a continuum with at least two points and connected complement.
For \(R>1\), let \(\mathcal M_{n,m}(A_R)\) denote the class of functions \(u\)
meromorphic in \(A_R\), having at most \(m\) finite poles in \(A_R\), counted with multiplicity,
and a pole at \(\infty\) of order at most \(n-m\), such that after subtracting
the principal parts of \(u\) at all its poles in \(A_R\cup\{\infty\}\), the remaining function
belongs to \(E_\infty(A_R)\).

\medskip

\noindent
Similarly, let \(\mathcal M_{n,m}(\Omega_R)\) denote the class of functions \(u\)
meromorphic in \(\Omega_R\), having at most \(m\) finite poles in \(\Omega_R\), counted with multiplicity,
and a pole at \(\infty\) of order at most \(n-m\), such that after subtracting
the principal parts of \(u\) at all its poles in \(\Omega_R\cup\{\infty\}\), the remaining function
belongs to \(E_\infty(\Omega_R)\).

\medskip

\noindent
We note that every rational function \(r\in \mathcal R_{n,m}\) belongs to both
\(\mathcal M_{n,m}(A_R)\) and \(\mathcal M_{n,m}(\Omega_R)\). Indeed, such a function is meromorphic
in the corresponding domain, has at most \(m\) finite poles, counted with multiplicity, has a pole
at \(\infty\) of order at most \(n-m\).

\medskip

\noindent
For a compact set \(K\), we write
\[
\|u\|_{L_\infty(K)}
\]
for the \(L_\infty\)-norm of \(u\) on \(K\).

\medskip

\noindent
For a function \(\varphi\) continuous on \(T_R\), define
\[
\Delta_{n,m}(\varphi;T_R)
:=
\inf\bigl\{
\|\varphi-u\|_{L_\infty(T_R)}:\ u\in\mathcal M_{n,m}(A_R)
\bigr\}.
\]
Similarly, for a function \(\varphi\) continuous on \(L_R\), define
\[
\Delta_{n,m}(\varphi;L_R)
:=
\inf\bigl\{
\|\varphi-u\|_{L_\infty(L_R)}:\ u\in\mathcal M_{n,m}(\Omega_R)
\bigr\}.
\]

\medskip
               
\noindent
Composition with \(\Psi\) or \(\Phi\) preserves the corresponding meromorphic class, and
addition or subtraction of a function in the relevant \(E_\infty\)-class does not change
the principal parts at the poles. Recall that
\[
f(\Psi(w))=g(w)+h(w).
\]
More precisely, the map
\[
u \mapsto u\circ\Psi-h
\]
gives a bijection between \(\mathcal M_{n,m}(\Omega_R)\) and \(\mathcal M_{n,m}(A_R)\), and
\[
\|g-(u\circ\Psi-h)\|_{L_\infty(T_R)}=\|f-u\|_{L_\infty(L_R)}.
\]
Similarly, the inverse correspondence is given by \(v\mapsto v\circ\Phi+h\circ\Phi\).
Hence the two meromorphic approximation problems are equivalent.
\begin{lemma}\label{lem:meromorphic-invariance}
For every \(n\ge m\ge 0\) and every \(R>1\), one has
\[
\Delta_{n,m}(f;L_R)=\Delta_{n,m}(g;T_R).
\]
\end{lemma}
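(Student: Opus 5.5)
The plan is to show that the correspondence \(u\mapsto v:=u\circ\Psi-h\) is a norm-preserving bijection from \(\mathcal M_{n,m}(\Omega_R)\) onto \(\mathcal M_{n,m}(A_R)\). Then the two infima defining \(\Delta_{n,m}(f;L_R)\) and \(\Delta_{n,m}(g;T_R)\) range over the same set of values. The norm identity itself is immediate. Since \(\Psi\) maps \(T_R\) homeomorphically onto \(L_R\) and \(f\circ\Psi=g+h\) on \(T_R\), for every \(u\) we have
\[
\|g-(u\circ\Psi-h)\|_{L_\infty(T_R)}=\|f\circ\Psi-u\circ\Psi\|_{L_\infty(T_R)}=\|f-u\|_{L_\infty(L_R)}.
\]
Here \(R>1\) guarantees that \(T_R\) lies in the region where \(f\circ\Psi=g+h\) holds and \(g,h\) are continuous; this needs \(R<R_0\), or else \(g\) is interpreted via its continuation where relevant. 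So the only real work is the class correspondence.

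\textbf{Step 1: \(u\circ\Psi-h\in\mathcal M_{n,m}(A_R)\) whenever \(u\in\mathcal M_{n,m}(\Omega_R)\).}

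\emph{Finite poles.} \(\Psi:A_R\to\Omega_R\) is conformal, so the finite poles of \(u\) correspond bijectively to those of \(u\circ\Psi\), with the same orders. Hence there are at most \(m\) of them, counted with multiplicity. Subtracting \(h\), which is analytic in \(|w|>r_0\) and in particular on \(\overline{A_R}\setminus\{\infty\}\) with \(h(\infty)=0\), neither creates nor cancels poles.

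\emph{Pole at infinity.} Since \(\Psi(w)=cw+c_0+O(1/w)\) with \(c=1/\Phi'(\infty)>0\), a pole of \(u\) at \(\infty\) of order \(\le n-m\) becomes a pole of \(u\circ\Psi\) at \(\infty\) of the same order.

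\emph{Smirnov condition.} Write \(u=P+q\), where \(P\) is the sum of the principal parts of \(u\) (finite poles together with the polynomial part at \(\infty\)) and \(q\in E_\infty(\Omega_R)\). Then \(q\circ\Psi\) is bounded and analytic in \(A_R\). Because \(\Psi\) is analytic and univalent across \(T_R\) (we have \(R>1>r_0\)), the boundary values are preserved, so \(q\circ\Psi\in E_\infty(A_R)\). Moreover, \(P\circ\Psi\) is rational in \(\Psi\). Each of its principal parts, at the finite poles and at \(\infty\), differs from a genuine principal part in \(w\) by a function analytic near the pole. Collecting these analytic remainders shows that \(u\circ\Psi\) minus its principal parts is bounded and analytic in \(A_R\) up to \(T_R\), hence lies in \(E_\infty(A_R)\). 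Subtracting \(h\), which is bounded and analytic on \(\overline{A_R}\), preserves this.

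\textbf{Step 2: the inverse map.} The map \(v\mapsto v\circ\Phi+h\circ\Phi\) is treated symmetrically, using that \(\Phi\) is analytic and univalent across \(L_R\) and that \(h\circ\Phi\) is bounded and analytic on \(\overline{\Omega_R}\). The two maps are mutually inverse, so the correspondence is a bijection.

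\textbf{Step 3: conclusion.} Taking infima over the bijection gives
\[
\Delta_{n,m}(f;L_R)=\Delta_{n,m}(g;T_R).
\]

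The main obstacle is the bookkeeping in Step 1: after composing with \(\Psi\), the principal part at \(\infty\) (a polynomial in \(\Psi(w)\)) and at the finite poles must be re-expanded in \(w\), and one must check that the remainder is indeed in \(E_\infty\) rather than merely analytic. I would handle this by noting that the remainder is analytic on a neighborhood of \(\overline{A_R}\) minus finitely many points where it is regular, and is bounded at \(\infty\). That gives boundedness and analyticity up to the boundary, which suffices.
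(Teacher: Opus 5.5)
Your proof is correct and follows the paper's argument: the paper also shows that \(u\mapsto u\circ\Psi-h\), with inverse \(v\mapsto v\circ\Phi+h\circ\Phi\), is a norm-preserving bijection between \(\mathcal M_{n,m}(\Omega_R)\) and \(\mathcal M_{n,m}(A_R)\), though it states this much more tersely than you do. Your caveat that \(R<R_0\) is implicitly needed is apt, and for \(R>1\) you only need \(h\) analytic in \(|w|>1\) (true for any continuum), so the appeal to \(r_0\) is unnecessary.
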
							
							
	\noindent
Thus the meromorphic approximation problem on the level curve \(L_R\) is reduced to
the corresponding problem on the circle \(T_R\). We next compare meromorphic and
rational approximation on the curves \(T_R\), \(L_R\) and on the associated compact
sets \(\overline{D}_R\), \(E_R\).						
\begin{lemma}\label{lem:curve-set-comparison}
For every \(n\ge m\ge 0\) and every \(R>1\), one has
\[
\Delta_{n,m}(g;T_R)\le \rho_{n,m}(g;\overline{D}_R)
\]
and
\[
\Delta_{n,m}(f;L_R)\le \rho_{n,m}(f;E_R).
\]
Moreover, for every \(1<\rho<R\), one has
\[
\rho_{n,m}(g;\overline{D}_\rho) \le c(\rho,R)\, \Delta_{n,m}(g;T_R)
\]
and
\[
\rho_{n,m}(f;E_\rho) \le C(\rho,R)\, \Delta_{n,m}(f;L_R),
\]
where \(c(\rho,R)>0\) and \(C(\rho,R)>0\) are independent of \(n\).
\end{lemma}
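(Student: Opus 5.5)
The plan is to prove the two easy inequalities by class inclusion and the two reverse inequalities by a Cauchy-integral projection, following the proof of Lemma~\ref{lem:rho-star-comparison}.

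\emph{Easy inequalities.} Every \(r\in\mathcal R_{n,m}\) belongs to \(\mathcal M_{n,m}(A_R)\) and to \(\mathcal M_{n,m}(\Omega_R)\). Also \(T_R\subset\overline D_R\) and \(L_R\subset E_R\). Hence for each such \(r\),
\[
\Delta_{n,m}(g;T_R)\le\|g-r\|_{\overline D_R},
\qquad
\Delta_{n,m}(f;L_R)\le\|f-r\|_{E_R}.
\]
Taking the infimum over \(r\) gives the first two inequalities.

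\emph{Reverse inequality on circles.} Fix \(1<\rho<R\) and take any \(u\in\mathcal M_{n,m}(A_R)\). Write \(u=P+v\), where \(P\) is the sum of the principal parts of \(u\) at its finite poles in \(A_R\) and at \(\infty\), and \(v\in E_\infty(A_R)\) vanishes at \(\infty\). The finite part of \(P\) is a rational function with at most \(m\) poles in \(A_R\), vanishing at \(\infty\), and the part at \(\infty\) is a polynomial of degree at most \(n-m\). Hence \(P\in\mathcal R_{n,m}\).

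For \(z\in\overline D_\rho\), set
\[
r(z):=\frac{1}{2\pi i}\int_{T_R}\frac{u(t)\,dt}{t-z}.
\]
By the Cauchy formula in \(A_R\), the \(v\)-contribution vanishes because \(v\) is analytic in \(A_R\), lies in \(E_\infty\), and vanishes at \(\infty\). The \(P\)-contribution reproduces \(P(z)\), since residue calculus on the exterior recovers the principal parts and the polynomial part. So \(r=P\in\mathcal R_{n,m}\). Since \(g\) is analytic in \(D_{R_0}\supset\overline D_R\), we have \(g(z)=\frac1{2\pi i}\int_{T_R}\frac{g(t)\,dt}{t-z}\), and therefore
\[
\|g-r\|_{\overline D_\rho}\le \frac{R}{R-\rho}\,\|g-u\|_{L_\infty(T_R)}.
\]
Taking the infimum over \(u\) gives \(\rho_{n,m}(g;\overline D_\rho)\le c(\rho,R)\,\Delta_{n,m}(g;T_R)\) with \(c(\rho,R)=R/(R-\rho)\). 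This implicitly requires \(R<R_0\), which is the only range where the statement is nonvacuous.

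\emph{Reverse inequality on level curves.} Repeat the argument directly on \(L_R\). For \(u\in\mathcal M_{n,m}(\Omega_R)\), define \(r(z):=\frac1{2\pi i}\int_{L_R}\frac{u(t)\,dt}{t-z}\) for \(z\in E_\rho\). The same decomposition \(u=P+v\) in \(\Omega_R\) shows \(r=P\in\mathcal R_{n,m}\): the \(E_\infty(\Omega_R)\) part vanishing at \(\infty\) integrates to zero, and the principal parts are reproduced. Cauchy's formula for \(f\) in \(G_R\) then gives
\[
\|f-r\|_{E_\rho}\le \frac{|L_R|}{2\pi\,\operatorname{dist}(E_\rho,L_R)}\,\|f-u\|_{L_\infty(L_R)}.
\]
Here \(L_R\) is an analytic Jordan curve of finite length, since \(\Psi\) is analytic on \(T_R\), and the distance is positive because \(\rho<R\). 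Taking the infimum over \(u\) yields the last inequality.

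\emph{Main obstacle.} The delicate step is the claim that the Cauchy projection of \(u\) is exactly the rational part \(P\) of degree type \((n,m)\). This needs the convention that the \(E_\infty\)-remainder is normalized to vanish at \(\infty\); its constant term can be moved into the polynomial part, which is harmless. It also needs the vanishing of \(\int_{\partial}\frac{v(t)\,dt}{t-z}\) for \(v\) in the exterior Smirnov class, which is the stated Smirnov characterization transported by inversion. Alternatively, one could deduce the \(f\)-statement from the \(g\)-statement via Lemma~\ref{lem:meromorphic-invariance} and Walsh-type comparisons, but the direct Cauchy argument avoids that.
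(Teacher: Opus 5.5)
Your proposal is correct and follows essentially the same route as the paper. Like the paper, you split $u\in\mathcal M_{n,m}(\Omega_R)$ into its rational part $P\in\mathcal R_{n,m}$ (principal parts plus the polynomial part at $\infty$) and an $E_\infty$ remainder vanishing at $\infty$, remove the remainder by the exterior Cauchy theorem, and bound $f-P$ on $E_\rho$ by the Cauchy integral of $f-u$ over $L_R$. The paper sets $r:=P$ directly instead of defining $r$ as the Cauchy integral of $u$, and it writes out only the level-curve case. Your remark that the argument tacitly needs $f$ analytic up to $L_R$, i.e.\ $R<R_0$, is apt; the paper uses the lemma only in that range.
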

\begin{proof}
The first two estimates are immediate. Indeed, every rational function in
\(\mathcal R_{n,m}\) belongs both to \(\mathcal M_{n,m}(A_R)\) and to
\(\mathcal M_{n,m}(\Omega_R)\), and
\[
T_R\subset \overline{D}_R,\qquad L_R\subset E_R.
\]

\medskip
\noindent
We prove only
\[
\rho_{n,m}(f;E_\rho)\le C(\rho,R)\,\Delta_{n,m}(f;L_R),
\qquad 1<\rho<R,
\]
since the proof of
\[
\rho_{n,m}(g;\overline{D}_\rho)\le c(\rho,R)\,\Delta_{n,m}(g;T_R)
\]
is identical.

Let \(h\in \mathcal M_{n,m}(\Omega_R)\). Let \(r\in\mathcal R_{n,m}\) be the
rational function obtained by summing the principal parts of \(h\) at its finite
poles in \(\Omega_R\) and the polynomial part of \(h\) at \(\infty\), and set
\[
v:=h-r.
\]
Then \(v\) is holomorphic in \(\Omega_R\) and satisfies
\[
v(\infty)=0.
\]
Since
\[
f-r=(f-h)+v,
\]
and the exterior Cauchy integral of \(v\) vanishes, it follows that
\[
(f-r)(z)
=
\frac{1}{2\pi i}\int_{L_R}\frac{f(t)-h(t)}{t-z}\,dt,
\qquad z\in E_\rho.
\]
Therefore,
\[
\|f-r\|_{E_\rho}
\le
C(\rho,R)\,\|f-h\|_{L_\infty(L_R)},
\]
where \(C(\rho,R)>0\) is independent of \(n\)  and \(h\). Since \(r\in\mathcal R_{n,m}\),
it follows that
\[
\rho_{n,m}(f;E_\rho)
\le
\|f-r\|_{E_\rho}
\le
C(\rho,R)\,\|f-h\|_{L_\infty(L_R)}.
\]
Taking the infimum over all \(h\in\mathcal M_{n,m}(\Omega_R)\), we obtain
\[
\rho_{n,m}(f;E_\rho)\le C(\rho,R)\,\Delta_{n,m}(f;L_R).
\]
This proves the lemma.
\end{proof}
\begin{remark}\label{rem:curve-set-r0}
We now assume in addition that \(E\) has analytic boundary.
By Subsection~\ref{subsec:conformal-reduction}, there exists \(r_0\in(0,1)\)
such that \(\Psi\) is analytic and univalent in \(\{w\in\C:|w|>r_0\}\).
Moreover, \(g\) is analytic in \(D_{R_0}\). Hence the arguments of
Lemmas~\ref{lem:meromorphic-invariance} and \ref{lem:curve-set-comparison}
extend to the case \(R=1\).

More precisely, for every \(n\ge m\ge 0\), one has
\[
\Delta_{n,m}(f;\partial E)=\Delta_{n,m}(g;\T).
\]
Also, for every \(r\) with \(r_0<r<1\), one has
\[
\rho_{n,m}(g;\overline{D}_r)\le c(r,1)\,\Delta_{n,m}(g;\T),
\]
where \(c(r,1)>0\) is independent of \(n\). These \(R=1\) relations are
used in the proof of Theorem~\ref{thm:two} for analytic boundary.
\end{remark}


\subsection{Proof of Theorem~\ref{thm:first-main}}
\label{subsec:proof-first-main-general}

We prove Theorem~\ref{thm:first-main} for a continuum \(E\subset\C\) with at least two points and connected complement. 

The disc-case form of Theorem~\ref{thm:first-main} is invariant under dilations. Hence the statement for the pair \((\overline{\D},\overline{D}_R)\) immediately yields the corresponding statement for arbitrary pairs \((\overline{D}_r,\overline{D}_s)\), \(1<r<s\).

Fix \(m\ge0\) and \(R\) with \(1<R<R_0\). Choose numbers \(r_1,r_2,\rho\) such that
\[
1<r_1<r_2<\rho<R.
\]
Then, for each \(k=0,1,\dots,m\), we have
\[
\rho_{n-m+k,k}(f;E)
\le
\rho_{n-m+k,k}(f;E_{r_1}),
\]
since \(E\subset E_{r_1}\). Next, by Lemma~\ref{lem:curve-set-comparison},
\[
\rho_{n-m+k,k}(f;E_{r_1})
\le
C_k(r_1,r_2)\,\Delta_{n-m+k,k}(f;L_{r_2}),
\]
where \(C_k(r_1,r_2)>0\) is independent of \(n\). By
Lemma~\ref{lem:meromorphic-invariance},
\[
\Delta_{n-m+k,k}(f;L_{r_2})
=
\Delta_{n-m+k,k}(g;T_{r_2}).
\]
Finally, applying Lemma~\ref{lem:curve-set-comparison} in the circle case, we obtain
\[
\Delta_{n-m+k,k}(g;T_{r_2})
\le
\rho_{n-m+k,k}(g;\overline{D}_{r_2}).
\]
Combining these inequalities, we arrive at
\begin{equation}\label{eq:num-chain-k}
\rho_{n-m+k,k}(f;E)
\le
C_k(r_1,r_2)\,\rho_{n-m+k,k}(g;\overline{D}_{r_2}).
\end{equation}
Multiplying \eqref{eq:num-chain-k} over \(k=0,1,\dots,m\), we obtain
\begin{equation}\label{eq:num-chain-product}
\prod_{k=0}^{m}\rho_{n-m+k,k}(f;E)
\le
C(r_1,r_2)\,
\prod_{k=0}^{m}\rho_{n-m+k,k}(g;\overline{D}_{r_2}),
\end{equation}
where \(C(r_1,r_2)>0\) is independent of \(n\).

On the other hand, for each \(k=0,1,\dots,m\), Lemma~\ref{lem:curve-set-comparison}
gives
\[
\rho_{n-m+k,k}(f;E_R)
\ge
\Delta_{n-m+k,k}(f;L_R).
\]
By Lemma~\ref{lem:meromorphic-invariance},
\[
\Delta_{n-m+k,k}(f;L_R)
=
\Delta_{n-m+k,k}(g;T_R).
\]
Applying Lemma~\ref{lem:curve-set-comparison} in the circle case, we obtain
\[
\Delta_{n-m+k,k}(g;T_R)
\ge
c_k(\rho,R)\,\rho_{n-m+k,k}(g;\overline{D}_\rho),
\]
where \(c_k(\rho,R)>0\) is independent of \(n\). Hence
\begin{equation}\label{eq:den-chain-k}
\rho_{n-m+k,k}(f;E_R)
\ge
c_k(\rho,R)\,\rho_{n-m+k,k}(g;\overline{D}_\rho).
\end{equation}
Multiplying \eqref{eq:den-chain-k} over \(k=0,1,\dots,m\), we get
\begin{equation}\label{eq:den-chain-product}
\prod_{k=0}^{m}\rho_{n-m+k,k}(f;E_R)
\ge
c(\rho,R)\,
\prod_{k=0}^{m}\rho_{n-m+k,k}(g;\overline{D}_\rho),
\end{equation}
where \(c(\rho,R)>0\) is independent of \(n\).
\medskip

\noindent
Dividing \eqref{eq:num-chain-product} by \eqref{eq:den-chain-product}, we conclude that
\[
\prod_{k=0}^{m}
\frac{\rho_{n-m+k,k}(f;E)}
{\rho_{n-m+k,k}(f;E_R)}
\le
C(r_1,r_2,\rho,R)\,
\prod_{k=0}^{m}
\frac{\rho_{n-m+k,k}(g;\overline{D}_{r_2})}
{\rho_{n-m+k,k}(g;\overline{D}_\rho)}.
\]
where \(C(r_1,r_2,\rho,R)>0\) is independent of \(n\).
\medskip

Applying the disc-case form of Theorem~\ref{thm:first-main} to \(g\) on the pair
\((\overline{D}_{r_2},\overline{D}_\rho)\), we obtain
\[
\limsup_{n\to\infty}
\left(
\prod_{k=0}^{m}
\frac{\rho_{n-m+k,k}(g;\overline{D}_{r_2})}
{\rho_{n-m+k,k}(g;\overline{D}_\rho)}
\right)^{1/n}
\le
\frac{r_2^{m+1}}{\rho^{m+1}}.
\]
Hence
\[
\limsup_{n\to\infty}
\left(
\prod_{k=0}^{m}
\frac{\rho_{n-m+k,k}(f;E)}
{\rho_{n-m+k,k}(f;E_R)}
\right)^{1/n}
\le
\frac{r_2^{m+1}}{\rho^{m+1}}.
\]
Since \(1<r_2<\rho<R\) were arbitrary, letting \(r_2\downarrow1\) and
\(\rho\uparrow R\) yields
\[
\limsup_{n\to\infty}
\left(
\prod_{k=0}^{m}
\frac{\rho_{n-m+k,k}(f;E)}
{\rho_{n-m+k,k}(f;E_R)}
\right)^{1/n}
\le
\frac{1}{R^{m+1}}.
\]
This proves Theorem~\ref{thm:first-main}.


\subsection{Proof of Theorem~\ref{thm:two} for analytic boundary}
\label{subsec:proof-analytic-boundary}

We now prove Theorem~\ref{thm:two} in the case where \(E\) is a continuum with
connected complement and analytic boundary.

Fix \(m\ge0\) and \(R\) with \(1\le R<R_0\). Recall from
Subsection~\ref{subsec:conformal-reduction} that
\[
f(\Psi(w))=g(w)+h(w),
\]
where \(g\) is analytic in \(D_{R_0}\), while \(h\) is analytic in
\(\{w\in\C:\ |w|>r_0\}\) for some \(r_0<1\) and satisfies \(h(\infty)=0\).
Since \(h\) is analytic in \(\{w\in\C:\ |w|>r_0\}\), the functions \(f\) and \(g\)
have the same meromorphic continuation and rational approximation behavior. In
particular, the quantities \(R_k\) appearing in Theorem~\ref{thm:two} are
precisely the corresponding meromorphy radii for \(g\) in the disc variable.

The upper estimate
\begin{equation}\label{eq:analytic-upper}
\limsup_{n\to\infty}
\left(
\prod_{k=0}^{m}\rho_{n-m+k,k}(f;E_R)
\right)^{1/n}
\le
\frac{R^{m+1}}{R_0R_1\cdots R_m}
\end{equation}
follows by applying the Saff--Gonchar theorem to each factor and multiplying the resulting
estimates. We may assume in what follows that \(R_m<\infty\), since otherwise
the right-hand side of \eqref{eq:analytic-upper} is equal to \(0\), and the theorem
follows immediately. It remains to prove the reverse inequality.

We distinguish two cases.

\smallskip
\noindent
\textit{Case 1: \(1<R<R_0\).}
For each \(k=0,1,\dots,m\), we have
\[
\rho_{n-m+k,k}(f;E_R)
\ge
\rho_{n-m+k,k}(f;L_R)
\ge
\Delta_{n-m+k,k}(f;L_R).
\]
By Lemma~\ref{lem:meromorphic-invariance},
\[
\Delta_{n-m+k,k}(f;L_R)=\Delta_{n-m+k,k}(g;T_R).
\]
Fix \(r\) with \(1<r<R\). Then Lemma~\ref{lem:curve-set-comparison} yields
\[
\Delta_{n-m+k,k}(g;T_R)
\ge
c_k(r,R)\,\rho_{n-m+k,k}(g;\overline{D}_r),
\qquad k=0,1,\dots,m.
\]
Multiplying over \(k=0,1,\dots,m\), we obtain
\[
\prod_{k=0}^{m}\rho_{n-m+k,k}(f;E_R)
\ge
C(r,R)\prod_{k=0}^{m}\rho_{n-m+k,k}(g;\overline{D}_r),
\]
where \(C(r,R)>0\) is independent of \(n\). Taking \(n\)th roots, passing to the
limit superior, and using the disc-case formula for \(g\) on \(\overline{D}_r\), we obtain
\[
\limsup_{n\to\infty}
\left(
\prod_{k=0}^{m}\rho_{n-m+k,k}(f;E_R)
\right)^{1/n}
\ge
\frac{r^{m+1}}{R_0R_1\cdots R_m}.
\]
Letting \(r\uparrow R\), we conclude that
\begin{equation}\label{eq:analytic-lower-R>1}
\limsup_{n\to\infty}
\left(
\prod_{k=0}^{m}\rho_{n-m+k,k}(f;E_R)
\right)^{1/n}
\ge
\frac{R^{m+1}}{R_0R_1\cdots R_m}.
\end{equation}

\smallskip
\noindent
\textit{Case 2: \(R=1\).}
 For each \(k=0,1,\dots,m\), we have
\[
\rho_{n-m+k,k}(f;E)
\ge
\rho_{n-m+k,k}(f;\partial E)
\ge
\Delta_{n-m+k,k}(f;\partial E).
\]
By Remark~\ref{rem:curve-set-r0}, the \(R=1\) versions of
Lemmas~\ref{lem:meromorphic-invariance} and
\ref{lem:curve-set-comparison} hold. Hence, for each
\(k=0,1,\dots,m\),
\[
\Delta_{n-m+k,k}(f;\partial E)=\Delta_{n-m+k,k}(g;\T).
\]
Fix \(r\) with \(r_0<r<1\). Then
\[
\Delta_{n-m+k,k}(g;\T)
\ge
c_k(r,1)\,\rho_{n-m+k,k}(g;\overline{D}_r),
\qquad k=0,1,\dots,m.
\]
Multiplying over \(k=0,1,\dots,m\), we obtain
\[
\prod_{k=0}^{m}\rho_{n-m+k,k}(f;E)
\ge
C(r)\prod_{k=0}^{m}\rho_{n-m+k,k}(g;\overline{D}_r),
\]
where \(C(r)>0\) is independent of \(n\). Taking \(n\)th roots, passing to the
limit superior, and using the disc-case formula for \(g\) on \(\overline{D}_r\), we obtain
\[
\limsup_{n\to\infty}
\left(
\prod_{k=0}^{m}\rho_{n-m+k,k}(f;E)
\right)^{1/n}
\ge
\frac{r^{m+1}}{R_0R_1\cdots R_m}.
\]
Letting \(r\uparrow 1\), we obtain
\begin{equation}\label{eq:analytic-lower-R=1}
\limsup_{n\to\infty}
\left(
\prod_{k=0}^{m}\rho_{n-m+k,k}(f;E)
\right)^{1/n}
\ge
\frac{1}{R_0R_1\cdots R_m}.
\end{equation}
Combining \eqref{eq:analytic-upper} with \eqref{eq:analytic-lower-R>1} in Case~1
and with \eqref{eq:analytic-lower-R=1} in Case~2, we conclude that for every
\(R\) with \(1\le R<R_0\),
\[
\limsup_{n\to\infty}
\left(
\prod_{k=0}^{m}\rho_{n-m+k,k}(f;E_R)
\right)^{1/n}
=
\frac{R^{m+1}}{R_0R_1\cdots R_m}.
\]
This proves Theorem~\ref{thm:two} in the analytic-boundary case.


\subsection{Proof of Theorem~\ref{thm:two} for Jordan boundary}
\label{subsec:proof-jordan-boundary}

We now pass from the analytic-boundary case to the case where \(E\) is a
continuum with connected complement and Jordan boundary. It is enough to prove
Theorem~\ref{thm:two} for \(R=1\). Indeed, if \(1<R<R_0\), then the sublevel set
\(E_R\) has analytic Jordan boundary, and the desired formula follows from
Subsection~\ref{subsec:proof-analytic-boundary} applied to the compact set \(E_R\).

Thus it remains to consider the case \(R=1\), that is, the continuum \(E\)
itself. For this, we follow the final step in Saff's proof of his  original theorem
 for continua bounded by a Jordan curve \cite{Saff1971},
namely, the passage from the analytic-boundary case to the Jordan-boundary case
based on a classical approximation theorem of Walsh; see
\cite[\S2.1]{Walsh1969}.

Recall that \(g_{\Omega}(z,\infty)\) denotes the Green function of
\[
\Omega:=\widehat{\C}\setminus E
\]
with pole at \(\infty\), extended by zero on \(E\).

\noindent
The upper estimate
\begin{equation}\label{eq:jordan-upper-R1}
\limsup_{n\to\infty}
\left(
\prod_{k=0}^{m}\rho_{n-m+k,k}(f;E)
\right)^{1/n}
\le
\frac{1}{R_0R_1\cdots R_m}
\end{equation}
follows by applying the Saff--Gonchar theorem to each factor and multiplying the resulting
estimates. It remains to prove the reverse inequality. The following lemma provides the
geometric comparison needed for this step.

\begin{lemma}\label{lem:jordan-approximation}
Fix \(r>1\). Then there exists a continuum \(E'\) with analytic Jordan boundary such
that
\[
E'\subset E\subset E'_r,
\]
where
\[
\Omega':=\widehat{\C}\setminus E',
\qquad
E'_r:=\{z\in\C:\ g_{\Omega'}(z,\infty)\le \log r\},
\]
and \(g_{\Omega'}(z,\infty)\) denotes the Green function of \(\Omega'\) with pole at
\(\infty\), extended by zero on \(E'\).

Moreover, for every \(\rho>1\),
\[
E'_\rho\subset E_\rho\subset E'_{r\rho},
\]
and
\[
R_k\le R_k'\le rR_k,
\qquad k=0,1,\dots,m,
\]
where \(R_k'\) denotes the \(k\)th radius of meromorphic continuation of
\(f\) relative to \(E'\).
\end{lemma}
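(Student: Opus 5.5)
The plan is to build \(E'\) as the closure of the interior of a level curve of the conformal map of the \emph{interior} of \(E\). This is Walsh's classical construction, which Saff also uses.

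Since \(\partial E\) is a Jordan curve, let \(\chi:\D\to\operatorname{int}E\) be a Riemann map. For \(0<s<1\), set
\[
E^{(s)}:=\chi(\overline{D}_s).
\]
Each \(E^{(s)}\) is a continuum with analytic Jordan boundary \(\chi(T_s)\), and \(E^{(s)}\subset E\). By Carathéodory's theorem, \(\chi\) extends to a homeomorphism of \(\overline{\D}\) onto \(E\). Hence \(\chi(T_s)\to\partial E\) uniformly as \(s\to1^-\), in the sense of Hausdorff distance.

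Next, let \(g_s\) be the Green function of \(\Omega^{(s)}:=\widehat{\C}\setminus E^{(s)}\) with pole at \(\infty\). Since \(E^{(s)}\subset E\), we have \(g_s\ge g_\Omega\) on \(\C\). I claim that
\[
\sup_{\partial E} g_s\to0 \qquad\text{as } s\to1^-.
\]
This is where the Jordan hypothesis enters, and it is the main obstacle. The plan is as follows. The sets \(E^{(s)}\) increase to \(\operatorname{int}E\), and \(E=\overline{\operatorname{int}E}\) is regular. The Green functions \(g_s\) decrease to a harmonic limit in \(\Omega\) which equals \(g_\Omega\); this is the standard monotone-convergence fact, since the capacities converge and the interior is dense. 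To upgrade this to uniform convergence on all of \(\C\), use Dini's theorem on compact sets together with the uniform behavior at \(\infty\) (\(g_s-\log|z|\) is bounded uniformly in \(s\)). Here continuity of the limit \(g_\Omega\), i.e.\ regularity of \(E\), is essential. Alternatively, one can argue directly with barriers: each \(z\in\partial E\) lies within \(\varepsilon\) of \(\chi(T_s)\), and a continuum of diameter bounded below has a uniform Hölder-type estimate for Green functions near it. Either way, choose \(s\) with \(g_s\le\log r\) on \(E\), and put \(E':=E^{(s)}\). This gives \(E'\subset E\subset E'_r\).

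For the nested sublevel sets, I use the maximum principle in \(\Omega\). The function \(g_{\Omega'}-g_\Omega\) is harmonic in \(\Omega\), including at \(\infty\) (the logarithmic poles cancel). On \(\partial E\) it takes values in \([0,\log r]\). Hence
\[
0\le g_{\Omega'}-g_\Omega\le\log r \quad\text{on }\Omega.
\]
Therefore \(g_\Omega\le\log\rho\) implies \(g_{\Omega'}\le\log(r\rho)\), and \(g_{\Omega'}\le\log\rho\) implies \(g_\Omega\le\log\rho\). This gives
\[
E'_\rho\subset E_\rho\subset E'_{r\rho}.
\]

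The radius comparison follows in the same way, using the open versions \(G'_\rho\subset G_\rho\subset G'_{r\rho}\) of these inclusions. Suppose \(f\) is meromorphic with at most \(k\) poles in \(G_\rho\). Then it is such in \(G'_\rho\), so \(R_k'\ge R_k\). Conversely, suppose \(f\) is meromorphic with at most \(k\) poles in \(G'_{\rho}\) for \(\rho<R_k'\). Then it is such in \(G_{\rho/r}\), so \(R_k\ge R_k'/r\). There is one point to check: \(f\) is analytic on \(E\supset E'\), so it is the same function germ relative to both sets, and the radii refer to continuations of one and the same function.
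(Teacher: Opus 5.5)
Your proof is correct. From the maximum-principle step on it coincides with the paper's argument: both use that $g_{\Omega'}-g_\Omega$ is harmonic in $\Omega$ (including $\infty$) with boundary values in $[0,\log r]$, and read off $E'_\rho\subset E_\rho\subset E'_{r\rho}$. Your derivation of $R_k\le R_k'\le rR_k$ through the open sets $G'_\rho\subset G_\rho\subset G'_{r\rho}$, together with the identity-theorem remark about the germ of $f$, fills in what the paper compresses into ``these inclusions imply''.

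The genuine difference is how you obtain $E'$. The paper simply invokes Saff's approximation step based on Walsh's theorem. You instead construct $E'=\chi(\overline{D}_s)$ from the interior Riemann map and prove $\sup_E g_s\to 0$ yourself. This makes the argument self-contained and shows exactly where the Jordan hypothesis enters. Carath\'eodory's extension gives $\sup_{z\in E}\mathrm{dist}(z,E^{(s)})\le\omega(1-s)\to0$, where $\omega$ is the modulus of continuity of $\chi$ on $\overline{\D}$, and this even yields a rate.

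Of your two routes to $\sup_E g_s\to0$, rely on the barrier route. For $s\ge 1/2$ the continua $E^{(s)}\supset E^{(1/2)}$ have diameters bounded below, and $g_s\le g_{E^{(1/2)}}$ is uniformly bounded on a fixed disc. Beurling's projection theorem then gives $g_s(z)\le C\,\mathrm{dist}(z,E^{(s)})^{1/2}$ uniformly in $s$, hence $\sup_E g_s\le C\,\omega(1-s)^{1/2}$.

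The monotone-convergence/Dini route, as you wrote it, does not close. Dini's theorem needs pointwise convergence $g_s\to 0$ on $\partial E$, which is precisely the claim. Moreover, density of $\mathrm{int}\,E$ in $E$ is not by itself what forces $\mathrm{cap}\,E^{(s)}\to\mathrm{cap}\,E$. What is needed is non-thinness of $\mathrm{int}\,E$ at each point of $\partial E$, which again comes from the Jordan property.
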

\begin{proof}
By the approximation step used by Saff in \cite{Saff1971}, based on Walsh's
theorem \cite[\S2.1]{Walsh1969}, there exists a continuum \(E'\) with analytic
Jordan boundary such that
\[
E'\subset E\subset E'_r.
\]
Since \(E'\subset E\), one has \(\Omega\subset \Omega'\). Hence
\[
g_{\Omega'}(z,\infty)\ge g_{\Omega}(z,\infty),
\qquad z\in\Omega,
\]
and therefore
\[
E'_\rho\subset E_\rho,
\qquad \rho>1.
\]

Set
\[
u(z):=g_{\Omega'}(z,\infty)-g_{\Omega}(z,\infty),
\qquad z\in\Omega.
\]
Then \(u\) is harmonic and nonnegative in \(\Omega\). We show that
\[
0\le u(z)\le \log r,
\qquad z\in\Omega.
\]
Indeed,
since
\[
E\subset E'_r
=
\{z\in\C:\ g_{\Omega'}(z,\infty)\le \log r\},
\]
we have
\[
u(z)=g_{\Omega'}(z,\infty)\le \log r,
\qquad z\in\partial E,
\]
because \(g_{\Omega}(z,\infty)=0\) on \(E\). Hence, by the maximum principle,
\[
0\le u(z)\le \log r,
\qquad z\in\Omega.
\]

Now let \(\rho>1\). If \(z\in E_\rho\), then
\[
g_{\Omega}(z,\infty)\le \log\rho.
\]
Combining this with the estimate \(u(z)\le \log r\), we obtain
\[
g_{\Omega'}(z,\infty)
=
g_{\Omega}(z,\infty)+u(z)
\le
\log\rho+\log r
=
\log(r\rho).
\]
Therefore,
\[
E_\rho\subset E'_{r\rho},
\qquad \rho>1.
\]

Thus
\[
E'_\rho\subset E_\rho\subset E'_{r\rho},
\qquad \rho>1.
\]
These inclusions imply 
\[
R_k\le R_k'\le rR_k,
\qquad k=0,1,\dots,m.
\]
\end{proof}

\smallskip

Fix \(r>1\), and let \(E'\) be as in Lemma~\ref{lem:jordan-approximation}. Since
\(E'\subset E\), for each \(k=0,1,\dots,m\),
\[
\rho_{n-m+k,k}(f;E')
\le
\rho_{n-m+k,k}(f;E).
\]
Multiplying over \(k=0,1,\dots,m\), we obtain
\[
\prod_{k=0}^{m}\rho_{n-m+k,k}(f;E')
\le
\prod_{k=0}^{m}\rho_{n-m+k,k}(f;E).
\]
Therefore,
\[
\limsup_{n\to\infty}
\left(
\prod_{k=0}^{m}\rho_{n-m+k,k}(f;E)
\right)^{1/n}
\ge
\limsup_{n\to\infty}
\left(
\prod_{k=0}^{m}\rho_{n-m+k,k}(f;E')
\right)^{1/n}.
\]
Since \(E'\) has analytic Jordan boundary, the analytic-boundary case proved above
applies to \(E'\). Hence
\[
\limsup_{n\to\infty}
\left(
\prod_{k=0}^{m}\rho_{n-m+k,k}(f;E')
\right)^{1/n}
=
\frac{1}{R_0'R_1'\cdots R_m'}.
\]
Using \(R_k'\le rR_k\), we get
\[
\frac{1}{R_0'R_1'\cdots R_m'}
\ge
\frac{1}{r^{m+1}R_0R_1\cdots R_m}.
\]
Consequently,
\[
\limsup_{n\to\infty}
\left(
\prod_{k=0}^{m}\rho_{n-m+k,k}(f;E)
\right)^{1/n}
\ge
\frac{1}{r^{m+1}R_0R_1\cdots R_m}.
\]
Letting \(r\downarrow1\), we obtain
\begin{equation}\label{eq:jordan-lower-R1}
\limsup_{n\to\infty}
\left(
\prod_{k=0}^{m}\rho_{n-m+k,k}(f;E)
\right)^{1/n}
\ge
\frac{1}{R_0R_1\cdots R_m}.
\end{equation}
Combining \eqref{eq:jordan-upper-R1} and \eqref{eq:jordan-lower-R1}, we conclude that
\[
\limsup_{n\to\infty}
\left(
\prod_{k=0}^{m}\rho_{n-m+k,k}(f;E)
\right)^{1/n}
=
\frac{1}{R_0R_1\cdots R_m}.
\]
This proves Theorem~\ref{thm:two}.


\subsection{Proof of Theorem~\ref{thm:equality-case}}

We now combine Theorems~\ref{thm:first-main}
and~\ref{thm:two} to prove the equality case.

Fix \(R\) with \(1<R<R_0\). Applying Theorem~\ref{thm:two}, we obtain
\[
\limsup_{n\to\infty}
\left(
\prod_{k=0}^{m}\rho_{n-m+k,k}(f;E_R)
\right)^{1/n}
=
\frac{R^{m+1}}{R_0R_1\cdots R_m},
\]
while Theorem~\ref{thm:two} with \(R=1\) gives
\[
\limsup_{n\to\infty}
\left(
\prod_{k=0}^{m}\rho_{n-m+k,k}(f;E)
\right)^{1/n}
=
\frac{1}{R_0R_1\cdots R_m}.
\]
Therefore, since
\[
\limsup_{n\to\infty}\frac{a_n}{b_n}
\ge
\frac{\limsup_{n\to\infty}a_n}{\limsup_{n\to\infty}b_n},
\qquad a_n>0,\ b_n>0,
\]
it follows that
\[
\limsup_{n\to\infty}
\left(
\prod_{k=0}^{m}
\frac{\rho_{n-m+k,k}(f;E)}
{\rho_{n-m+k,k}(f;E_R)}
\right)^{1/n}
\ge
\frac{1}{R^{m+1}}.
\]
Combining this lower bound with the upper bound from Theorem~\ref{thm:first-main}, we conclude that
\[
\limsup_{n\to\infty}
\left(
\prod_{k=0}^{m}
\frac{\rho_{n-m+k,k}(f;E)}
{\rho_{n-m+k,k}(f;E_R)}
\right)^{1/n}
=
\frac{1}{R^{m+1}}.
\]
This proves Theorem~\ref{thm:equality-case}.
\subsection{A common subsequence for rows and partial products}
\label{subsec:common-subsequence}

We now prove Theorem~\ref{thm:common-subsequence}. We begin by showing that
for each fixed \(m\), the extremal exponential behavior in the Saff--Gonchar theorem is
attained simultaneously for all rows \(k=0,1,\dots,m\), and likewise for the
corresponding partial products.

Fix \(m\ge 0\) and assume that \(R_m<\infty\).
By Theorem~\ref{thm:two} with \(R=1\), we have
\[
\limsup_{n\to\infty}
\left(\prod_{j=0}^{m}\rho_{n-m+j,j}(f;E)\right)^{1/n}
=
\frac{1}{R_0R_1\cdots R_m}.
\]
Choose a subsequence \(\Lambda\subset\mathbb N\) such that
\[
\left(\prod_{j=0}^{m}\rho_{n-m+j,j}(f;E)\right)^{1/n}
\to
\frac{1}{R_0R_1\cdots R_m},
\qquad n\to\infty,\ \ n\in\Lambda.
\]

For each \(j=0,1,\dots,m\), the Saff--Gonchar theorem gives
\[
\limsup_{n\to\infty}\rho_{n-m+j,j}(f;E)^{1/n}=\frac{1}{R_j}.
\]
Hence, along the subsequence \(\Lambda\),
\[
\limsup_{\substack{n\to\infty\\ n\in\Lambda}}
\rho_{n-m+j,j}(f;E)^{1/n}\le \frac{1}{R_j},
\qquad j=0,1,\dots,m.
\]
Since
\[
\prod_{j=0}^{m}\rho_{n-m+j,j}(f;E)^{1/n}
\to
\prod_{j=0}^{m}\frac{1}{R_j},
\qquad n\to\infty,\ \ n\in\Lambda,
\]
it follows that each factor converges to its upper bound, that is,
\[
\rho_{n-m+j,j}(f;E)^{1/n}\to \frac{1}{R_j},
\qquad j=0,1,\dots,m,
\qquad n\to\infty,\ \ n\in\Lambda.
\]
Now fix \(k\) with \(0\le k\le m\). Taking the product over \(j=0,1,\dots,k\), we obtain
\[
\left(\prod_{j=0}^{k}\rho_{n-m+j,j}(f;E)\right)^{1/n}
\to
\frac{1}{R_0R_1\cdots R_k},
\qquad n\to\infty,\ \ n\in\Lambda.
\]
This proves the desired assertions on \(E\).

Now let \(R\) satisfy \(1<R<R_0\). We claim that
\begin{equation}\label{eq:ER-product-subsequence}
\left(\prod_{j=0}^{m}\rho_{n-m+j,j}(f;E_R)\right)^{1/n}
\to
\frac{R^{m+1}}{R_0R_1\cdots R_m},
\qquad n\to\infty,\ \ n\in\Lambda.
\end{equation}
Indeed, if this were false, then, since by Theorem~\ref{thm:two}
\[
\limsup_{n\to\infty}
\left(\prod_{j=0}^{m}\rho_{n-m+j,j}(f;E_R)\right)^{1/n}
=
\frac{R^{m+1}}{R_0R_1\cdots R_m},
\]
there would exist a subsequence \(\Lambda'\subset\Lambda\) such that
\[
\left(\prod_{j=0}^{m}\rho_{n-m+j,j}(f;E_R)\right)^{1/n}
\to \beta
\qquad \text{as } n\to\infty,\ \ n\in\Lambda',
\]
with
\[
\beta<\frac{R^{m+1}}{R_0R_1\cdots R_m}.
\]
Since, by the choice of \(\Lambda\),
\[
\left(\prod_{j=0}^{m}\rho_{n-m+j,j}(f;E)\right)^{1/n}
\to
\frac{1}{R_0R_1\cdots R_m},
\qquad n\to\infty,\ \ n\in\Lambda,
\]
it would then follow that
\[
\left(
\prod_{j=0}^{m}
\frac{\rho_{n-m+j,j}(f;E)}
{\rho_{n-m+j,j}(f;E_R)}
\right)^{1/n}
\to
\frac{\displaystyle \frac{1}{R_0R_1\cdots R_m}}{\beta}
>
\frac{1}{R^{m+1}}.
\]
as \(n\to\infty\), \(n\in\Lambda'\), contrary to Theorem~\ref{thm:equality-case}.
This proves the claim.

For each \(j=0,1,\dots,m\), the Saff--Gonchar theorem on the set \(E_R\) gives
\[
\limsup_{n\to\infty}\rho_{n-m+j,j}(f;E_R)^{1/n}=\frac{R}{R_j}.
\]
Arguing exactly as above, and using \eqref{eq:ER-product-subsequence}, we conclude that along \(\Lambda\),
\[
\rho_{n-m+j,j}(f;E_R)^{1/n}\to \frac{R}{R_j},
\qquad j=0,1,\dots,m,
\qquad n\to\infty,\ \ n\in\Lambda.
\]
Therefore, for each \(k=0,1,\dots,m\),
\[
\left(\prod_{j=0}^{k}\rho_{n-m+j,j}(f;E_R)\right)^{1/n}
\to
\frac{R^{k+1}}{R_0R_1\cdots R_k},
\qquad n\to\infty,\ \ n\in\Lambda.
\]
and
\[
\left(
\prod_{j=0}^{k}
\frac{\rho_{n-m+j,j}(f;E)}
{\rho_{n-m+j,j}(f;E_R)}
\right)^{1/n}
\to
\frac{1}{R^{k+1}},
\qquad n\to\infty,\ \ n\in\Lambda.
\]
This proves Theorem~\ref{thm:common-subsequence}.



\noindent
Vasiliy A. Prokhorov\\
Department of Mathematics and Statistics\\
University of South Alabama\\
Mobile, Alabama 36688-0002\\
{\tt prokhoro@southalabama.edu}
\bigskip

\end{document}